\newcommand{\R}{\mathbb{R}}
\newcommand{\E}{\mathbb{E}}
\newcommand{\bpf}{\begin{proof}}
\newcommand{\epf}{\end{proof}}
\newtheorem{theorem}{Theorem}
\newtheorem{remark}{Remark}
\title{On Higher Order Drift and Diffusion Estimates for Stochastic SINDy \thanks{Submitted to the editor April 21, 2023, Resubmitted December 1, 2023}}
\author{Mathias Wanner\thanks{Department of Mechanical Engineering, 
		University of California, Santa Barbara (\email{mwanner@ucsb.edu},\email{mezic@ucsb.edu})}
	\and
	Dr. Igor Mezi\'{c} \footnotemark[2]
}
\begin{document}
	\maketitle
	\begin{abstract}
		The Sparse Identification of Nonlinear Dynamics (SINDy) algorithm can be applied to stochastic differential equations to estimate the drift and the diffusion function using data from a realization of the SDE. The SINDy algorithm requires sample data from each of these functions, which is typically estimated numerically from the data of the state. We analyze the performance of the previously proposed estimates for the drift and diffusion function to give bounds on the error for finite data. However, since this algorithm only converges as both the sampling frequency and the length of trajectory go to infinity, obtaining approximations within a certain tolerance may be infeasible. To combat this, we develop estimates with higher orders of accuracy for use in the SINDy framework. For a given sampling frequency, these estimates give more accurate approximations of the drift and diffusion functions, making SINDy a far more feasible system identification method.
	\end{abstract}
	
	\begin{keyword}
		Stochastic Differential Equations, System Identification, Numerical Methods, SINDy			
	\end{keyword}
	
	\begin{AMS}
		37H99,37M15,60H35,65C40,93E12
	\end{AMS}

\section{Introduction}
For many dynamical systems, data may abundant while there remains no analytic models to describe the system. These systems may be too complex, may have too large a dimension, or may be too poorly understood to model using first principles. For these reasons, data driven modeling has become important for applications in science and engineering. There is a wide variety of system identification methods, ranging from classical methods, \cite{ljung1998system}, to Dynamic Mode Decomposition and Koopman operator methods, \cite{schmid2010dynamic,williams2015data,mezic2005spectral,wanner2022robust}, to neural networks \cite{kumpati1990identification,kosmatopoulos1995high} and many others. These methods vary in their their complexity, training methods, model sizes, and interpretability. Sparse Identification of Nonlinear Dynamics (SINDy) is a method which allows for some complexity (allowing nonlinear models over only linear ones) while the sparse solution promotes simple, interpretable models.

The SINDy algorithm, developed by Brunton et. al. \cite{brunton2016discovering} estimates the parameters of an ordinary differential equation from data. It does this by using a dictionary of functions and finding a sparse representation of the derivative in this dictionary. The data for the derivative can be obtained using finite differences of data from the state. For ODEs, the performance of this algorithm has been analyzed in \cite{zhang2019convergence}.

SINDy has several extensions and adaptations; it has also been extended to identify control systems \cite{brunton2016sparse,kaiser2018sparse}, adapted to systems with implicit solutions \cite{mangan2016inferring,kaheman2020sindy}, and formulated in ways to improve its robustness to noise \cite{fasel2022ensemble,messenger2021weak,messenger2021weak2}, to name a few. Additionally, different methods for computing the sparse solution have been proposed, including LASSO \cite{tibshirani1996regression}, the sequential thresholding presented in the original paper \cite{brunton2016discovering}.

The problem of system identification can be similarly posed for stochastic differential equations. Many systems, due to their complexity, seperation of timescales, or intrinsic randomness, lead to data that may be better approximated as a stochastic process. However, these systems may require more sophisticated tools of analysis \cite{friedrich2011approaching}. In order to identify an SDE, we need to estimate a diffusion function, which determines the nature of the random forcing, in addition to the drift, which represents the mean dynamics. In the context of single particle tracking, the diffusion constant is locally estimated using the mean square displacement \cite{qian1991single}, and the uncertainty can be quantified and compared against the Cramer-Rao bound \cite{michalet2012optimal,vestergaard2014optimal}.
	
The mean square displacement can be generalized for the estimation of SDEs, where the drift and diffusion functions may vary spatially. Local approximations for the drift and diffusion functions can be obtained from data using the Kramer-Moyal expansion, and can be used to estimate spatially varying parameters \cite{siegert1998analysis,friedrich2000extracting,comte2007penalized,sicard2021position}. An estimate of the diffusion parameter which improves upon the Kramer-Moyal estimate is given in \cite{ragwitz2001indispensable}. Further improvements have been made, such as estimates of the diffusion that are unbiased in the presence of measurement noise have also been developed \cite{vestergaard2014optimal} with the error quantified \cite{frishman2020learning}, and methods which allow for the estimation of underdamped Langevin equations \cite{bruckner2020inferring}.

The estimation of the drift and diffusion functions using these Kramer-Moyal estimates extends naturally into the SINDy framework. Stochastic Force Inference, as presented in \cite{frishman2020learning}, is a similar non-parametric identification method for SDEs, which differs in that it does not use a sparse solver. In \cite{boninsegna2018sparse}, the SINDy algorithm was used to estimate the parameters an SDE using these Kramer-Moyal estimates. This method was expanded on in \cite{dai2020detecting}; solution methods based on binning and cross validation were introduced to reduce the effects of noise. Callaham et. al \cite{callaham2021nonlinear} expand upon this method by adapting it to applications for which the random forcing cannot be considered white noise.

In the paper, we conduct a numerical analysis for using SINDy for stochastic system and introduce improved methods which give higher order convergence. As mentioned, in \cite{boninsegna2018sparse} the drift and diffusion are approximated using the Kramer-Moyal formulas. We demonstrate the convergence rates of the algorithm with respect to the sampling period and the length of the trajectory. The approximations given in \cite{boninsegna2018sparse} only give first order convergence with respect to the sampling frequency. A similar analysis of the Kramer-Moyal estimates based on binning can be found in \cite{chen2022non}. Additionally, since they only converge in expectation, we may require a long trajectory for the variance of the estimate to be tolerable. Combined, these can make the data requirements to use SINDy for an SDE very demanding. To help remedy this, we demonstrate how we can develop higher order approximations of the drift and diffusion functions for use in SINDy.

The paper is organized as follows: First, we will review the SINDy algorithm and some concepts from SDEs which we will be using in this paper. We will then conduct a numerical analysis of the algorithms presented in \cite{boninsegna2018sparse}, using the Ito-Taylor expansion of the SDE. Next, we will present new, higher order methods and show the convergence rates of these methods. Finally, we will test all of these methods on several numerical examples to demonstrate how the new methods allow us to compute far more accurate approximations of the system for a given sampling frequency and trajectory length.

\section{Sparse Identification of Nonlinear Dynamics (SINDy)}
\subsection{Overview}
Consider a system governed by the ordinary differential equation
\begin{equation}
\label{eq:ODE}
\dot{x}=f(x),~~~~x\in \R^d.
\end{equation}
If the dynamics of the system, $f$, are unknown, we would like to be able to estimate the function $f$ using only data from the system. The SINDy algorithm \cite{brunton2016discovering} estimates $f$ by choosing a dictionary of functions, $\theta=[\theta_1,\theta_2,...,\theta_k]$ and assuming $f$ can be expressed (or approximated) as a linear combination of these functions. The $i^{th}$ component of $f$, $f_i$, can then be expressed as
\[f_i(x)=\sum_{j=1}^k \theta_j(x)\alpha_{i,j}=\theta(x)\alpha_i ,\]
where $\theta=\begin{bmatrix} \theta_1 & \hdots & \theta_k\end{bmatrix}$ is a row vector containing the dictionary functions and $\alpha^i = \begin{bmatrix} \alpha^i_1 & \hdots &\alpha^i_k\end{bmatrix}^T$ is the column vector of coefficients. Given data for $f(x_j)$ and $\theta(x_j)$ for $j=1,...,n$, we can find the coefficients $\alpha_i$ by solving the minimization
\begin{equation}
\label{eq:leastsq}
\alpha_i=\underset{v}{argmin}\sum_{j=1}^n |f_i(x_j)-\theta(x_j)v|^2.
\end{equation}
This optimization can be solved by letting
\[\Theta=\begin{bmatrix}
\theta(x_1)\\\theta(x_2)\\\vdots\\\theta(x_n)
\end{bmatrix},~~~F=\begin{bmatrix}
f(x_1)\\f(x_2)\\\vdots\\f(x_n)
\end{bmatrix},~~~ \text{and} ~~~\alpha=\begin{bmatrix} \alpha^1 & \alpha^2 & \hdots & \alpha^d\end{bmatrix},\]
and computing $\alpha=\Theta^+F.$

\subsection{Approximating $f(x)$}
Typically, data for $f(x)$ cannot be measured directly. Instead, it is usually approximated using finite differences. The forward difference gives us a simple, first order approximation to $f$:
\begin{equation}
\label{eq:FD_det}
f(x(t))=\frac{x(t+\Delta t)-x(t)}{\Delta t}+O(\Delta t).
\end{equation}
Here $O(\Delta t)$ is the Landau ``big `O''' notation. The approximation (\ref{eq:FD_det}) is derived from the Taylor expansion of $x$, 
\begin{equation}
\label{eq:Taylor}
x(t+\Delta t)=x(t)+\dot{x}(t)\Delta t+\ddot{x}(t)\frac{\Delta t^2}{2}+...=x(t)+f(x(t))\Delta t+\frac{\partial f}{\partial x}\Big|_{x(t)}f(x(t))\frac{\Delta t^2}{2}+...,
\end{equation}
for $f$ sufficiently smooth. The Taylor expansion (\ref{eq:Taylor}) is also used to derive higher order methods, such as the central difference,
\begin{equation}
\label{eq:CD_det}
f(x)=\frac{x(t+\Delta t)-x(t-\Delta t)}{2\Delta t}+O(\Delta t^2).
\end{equation}
We can use these finite difference to populate the matrix $F$ used in the optimization (\ref{eq:leastsq}), knowing that we can control the error with a small enough step size.

\subsection{Sparse Solutions}
Since we are choosing an arbitrary dictionary of functions, $\{\theta_1,\hdots,\theta_k\}$, the conditioning of the minimization (\ref{eq:leastsq}) can become very poor. Additionally, if the the dictionary is large and contains many redundant functions, having a solution which contains only a few nonzero entries would help to provide a simple interpretable result. The SINDy algorithm addresses these by using a sparse solution to (\ref{eq:leastsq}). There are multiple methods for obtaining a sparse solution such as the least absolute shrinkage and selection operator (LASSO) or the sequentially thresholded least squares algorithm \cite{brunton2016discovering}. Using a sparse solution will give us a simpler identified system and improves the performance over the least squares solution.

\section{Review of SDEs}
Consider the Ito stochastic differential equation
\begin{equation}
\label{eq:Ito}
dX_t=\mu(X_t)dt+\sigma(X_t)dW_t
\end{equation}
where $X_t\in \R^d$ and $W_t$ is $d$-dimensional Brownian motion. The function $\mu:\R^d\to\R^d$ is the drift, a vector field which determines the average motion of system, while $\sigma:\R^d\to \R^{d\times d}$ is the diffusion function, which governs the stochastic forcing. The diffusion, $\sigma$, is also assumed to be positive definite. Motivated by SINDy, we wish to estimate $\mu$ and $\sigma^2$ from data. We note that we are estimating $\Sigma=\frac{1}{2}\sigma^2$ and not $\sigma$ directly. However, if $\sigma$ is positive definite, which is assumed, $\sigma^2$ uniquely determines $\sigma$.

\begin{comment}
Alternatively, (\ref{eq:Ito}) can be rewritten as a Stratonovich SDE
\begin{equation}
\label{eq:Strat}
dX_t=\underline{\mu}(X_t)dt+\sigma(X_t)\circ dW_t,
\end{equation}
where 
\[\underline{\mu}(X_t)=\mu(X_t)-\frac{1}{2}\sigma(X_t)\frac{\partial \sigma}{\partial x}(X_t)\]
is the Stratonovich drift coefficient. 
\end{comment}

\subsection{Ergodicity}
Since SINDy represents functions using the data vectors evaluated along the trajectory, we will need to relate the the data vectors to the functions represented in some function space. To do this, we will assume that the process $X_t$ has an ergodic measure $\rho$, so that both
\begin{equation}
\label{eq:erg}
\lim_{T\to\infty} \frac{1}{T}\int_0^T f(X_t)dt=\int_{\R^d} f(x)d\rho(x) ~~~~~\text{and}~~~~~\lim_{N\to\infty} \frac{1}{N} \sum_{i=0}^{N-1} f(X_{t_i})=\int_{\R^d} f(x)d\rho(x)
\end{equation}
hold almost surely. Some sufficient conditions that ensure that the SDE (\ref{eq:Ito}) generates a process with a stationary or an ergodic measure are given in e.g. \cite{khasminskii2011stochastic}.

With this ergodic measure, the natural function space to consider is the Hilbert space $L^2(\rho)$. For any two functions $f,g\in L^2(\rho)$, we can use time averages to evaluate inner products.
\begin{equation}
\label{eq:ergIP}
\lim_{T\to \infty}\frac{1}{T}\int_0^T g^*(X_t)f(X_t)dt=\lim_{N\to \infty}\frac{1}{N}\sum_{i=0}^{N-1}g^*(X_{t_n})f(X_{t_n})=\int_{\R^d}g^*f\,d\rho=\langle f,g\rangle .
\end{equation}
For notational simplicity, we will also use the brackets $\langle \cdot,\cdot \rangle$ to denote the matrix of inner products for two row vector-valued functions: if $f=\begin{bmatrix} f_1 & \hdots & f_k\end{bmatrix}$ and $g=\begin{bmatrix} g_1 & \hdots & g_l\end{bmatrix}$, 
\[\langle f,g\rangle^{i,j}=\langle f^j,g^i\rangle,~~~~~~~~\text{or equivelently,}~~~~~~~~ \langle f,g\rangle =\int_{\R^d}g^*f\,d\rho.\]

\subsection{Ito-Taylor Expansion}
In order to evaluate the performance of different SINDy methods on SDEs, we will need to use the Ito-Taylor expansion of the solution. Let $\Sigma=\frac{1}{2}\sigma^2$. Following the notation of \cite{kloeden1992stochastic}, let
\[L^0=\sum_{j=1}^d \mu^j\frac{\partial}{\partial x^j}+\sum_{j,l}^d (\Sigma)^{j,l}\frac{\partial^2}{\partial x^j\partial x^l}\]
be the operator for the Ito equation (\ref{eq:Ito}) and define the operators
\[L^j=\sum_{i=1}^d\sigma^{i,j} \frac{\partial}{\partial x^i}.\]
These operators will give us the coefficients for the Ito-Taylor expansion of a function $f$. Denoting $\Delta W^i_t=W^i_{t+\Delta t}-W^i_t$, the first couple of terms are
\begin{align*}
f(X_{t+\Delta t})=&f(X_t)+L^0f(X_t)\Delta t+\sum_{i=1}^dL^if(X_t)\Delta W^i_t+(L^0)^2f(X_t)\Delta t \,+\\
&\sum_{i=1}^d L^iL^0f(X_t)\int_t^{t+\Delta t}\int_t^{s_1}dW^i_{s_2}ds_1+\sum_{i=1}^d L^0L^if(X_t)\int_t^{t+\Delta t}\int_t^{s_1}ds_2dW^i_{s_1}+\hdots
\end{align*}
The general Ito-Taylor expansions can be found in Theorem 5.5.1 of \cite{kloeden1992stochastic}. We will use the Ito-Taylor expansion to develop estimates for $\mu^i$ and $\sigma^{i,j}$. For the purposes of this paper, we will be able to specialize to a few cases, which will allow us to quantify the error in our estimates while also being simpler to manipulate than the larger expansion.
\subsubsection{Weak Expansion}
The first specialization of the Ito-Taylor expansion will be a weak expansion, which will allow us to estimate the expected error in our estimate.
\begin{equation}
\label{eq:WeakExp}
\E(f(X_{t+\Delta t})|X_t)=f(X_t)+\sum_{m=1}^k (L^0)^mf(X_t) \frac{\Delta t^m}{m!}+R(X_t).
\end{equation}
with $R(X_t)=O(\Delta t^{m+1})$. 

This expansion follows from the Proposition 5.5.1 and Lemma 5.7.1 of \cite{kloeden1992stochastic}. Theorem 5.5.1 gives the general Ito-Taylor expansion, while Lemma 5.7.1 shows that all multiple Ito integrals which contain integration with respect to a component of the Weiner process have zero first moment. The remainder term is then a standard integral.

We will consider the expansion (\ref{eq:WeakExp}) with the functions $f(x)=x^i$ to get
\begin{equation}
\label{eq:WeakDrift}
\E(X^i_{t+\Delta t}|X_t)=X^i_t+\mu^i(X_t)\Delta t+\sum_{m=2}^k (L^0)^{m-1}\mu^i(X_t)\frac{\Delta t^m}{m!}+O(\Delta t^{k+1})
\end{equation}
to estimate the drift. To estimate the diffusion, we will let $f(x)=(x^i-X^i_t)(x^j-X^j_t)$, with $X_t$ held constant at the value at the beginning of the time step, to get
\begin{equation}
\label{eq:WeakDiff}
\E(f(X_{t+\Delta t})\,|\,X_t)=2\Sigma^{i,j}(X_t)\Delta t+g(X_t)\Delta t^2+O(\Delta t^3)
\end{equation}
where
\[g=L^0\Sigma^{i,j}+\mu^i\mu^j+\sum_{k=1}^d\left(\Sigma^{i,k}\frac{\partial \mu^j}{\partial x^k}+\Sigma^{j,k}\frac{\partial \mu^i}{\partial x^k}\right).\]

\subsubsection{Strong Expansions}
We will also use the strong Ito-Taylor expansion, which will give a bound on the variance of our estimates. These immediately follow from Proposition 5.9.1 of \cite{kloeden1992stochastic}. First, if we apply it to $f(x)=x^i$, we have 
\begin{equation}
\label{eq:StrongDrift}
X^i_{t+\Delta t}-X^i_t=\mu^i(X_t) \Delta t+\sum_{m=1}^d \sigma^{i,m}(X_t) \Delta W^m_t +R_t,
\end{equation}
where $\E(|R_t|^2|X_t)d\rho=O(\Delta t^{2})$.

Similarly, we can apply the same proposition to $f(x)=(x^i-X^i_t)(x^j-X^j_t)$, and which gives us (after moving around some of the terms)
\begin{equation}
\label{eq:StrongDiff}
(X^i_{t+\Delta t}-X^i_t)(X^j_{t+\Delta t}-X^j_t)=2\Sigma^{i,j}(X_t)\Delta t+\sum_{k,l=1}^d(\sigma^{k,i}\sigma^{l,j}(X_t)+\sigma^{k,j}\sigma^{l,i}(X_t))I_{(i,j)}+R_t,
\end{equation}
where $\E(|R_t|^2|X_t)=O(\Delta t^3)$ and $I_{(i,j)}=\int_0^{\Delta t}\int_0^{s_1}dW_{s_2}^idW_{s_1}^j$. When we create estimates of $\mu^i(X_t)$ and $\Sigma^{i,j}(X_t)$, the expansions (\ref{eq:StrongDrift}) and (\ref{eq:StrongDiff}) will be useful in bounding the variance of these two estimates.

\begin{remark}
	\label{re:Assumptions}
	For the expansions, it is implicit that we must assume that all (up to the necessary order) of the coefficient functions, $L^{a_1}L^{a_2}...L^{a_n}f$, satisfy the requirements with respect to the multiple Ito integrals set forth in chapter five of \cite{kloeden1992stochastic}. The conditions set forth are necessary for the Ito-Taylor expansions to be valid locally.
	
	Additionally, we will also need to assume that the remainder terms will be square integrable with respect to the ergodic measure. In particular, we will assume
	\[\int_{\R^d} |R(x)|^2d\rho(x)=O(\Delta t^{m+1})\]
	in the weak expansion and
	\[\int_{\R^d}R_2(x)^2 d\rho(x) = O(\Delta t)~~~~\left(\text{or}~~~~O(\Delta t^2)\right)\]
	in the strong expansions, where $R_2(x)=\E(|R_t|^2~|~X_t=x)$.
	This assumption will allow us to take time averages and expect them to be finite. Following the proofs in \cite{kloeden1992stochastic}, it can be seen that these can be guaranteed 
	by imposing similar integrability conditions on the coefficient functions with respect to the ergodic measure. This will often be the case, as the ergodic measure will decay rapidly towards infinity. A sufficiently strong condition to guarantee both the the integrability of the error is, for example, that both the diffusion and drift functions satisfy are smooth and the derivatives of all orders are bounded.
\end{remark}
\section{SINDy for Stochastic Systems}

Given data for the drift and diffusion matrix of (\ref{eq:Ito}), we can set up an optimization problem similar to (\ref{eq:leastsq}). Similar to the deterministic case, we can also approximate $\mu$ and $\Sigma$ using finite differences. As before, we assume we have a dictionary $\theta=[\theta_1,\theta_2,...,\theta_k]$ and that each of the components of $\mu$ and $\Sigma$ lie in the span of the components of $\theta$:
\begin{equation*}
\mu^i=\theta \alpha^i~~~~~~~\text{and}~~~~~~~\Sigma^{i,j}=\theta \beta^{i,j}.
\end{equation*}

Suppose we have the data from a trajectory of length $T$ with sampling period $\Delta t$. If we let $\Delta X^i_{t_n}=X^i_{t_{n+1}}-X^i_{t_n}$, we can approximate the drift using

\begin{equation}
\label{eq:FD}
\mu^i(X_{t_m})\approx\frac{X^i_{t_{m+1}}-X^i_{t_m}}{\Delta t}=\frac{\Delta X^i_t}{\Delta t}.
\end{equation}
Similarly, we can approximate the diffusion with
\begin{equation}
\label{eq:FDsquared}
\Sigma^{i,j}(X_{t_m})\approx\frac{(X^i_{t_{m+1}}-X^i_{t_m})(X^j_{t_{m+1}}-X^j_{t_m})}{2\Delta t}=\frac{\Delta X^i_{t_m}\Delta X^j_{t_m}}{2\Delta t}.
\end{equation}

It was shown in \cite{boninsegna2018sparse} that we can use the approximations (\ref{eq:FD}) and (\ref{eq:FDsquared}) to set up the minimization problems
\begin{equation}
\label{eq:MuLS}
\tilde{\alpha}^i=\underset{v}{argmin}\sum_{m=0}^{N-1} \left|\frac{\Delta X^i_{t_m}}{\Delta t}-\theta(X_{t_m})v\right|^2.
\end{equation}
and
\begin{equation}
\label{eq:SigLS}
\tilde{\beta}^{i,j}=\underset{v}{argmin}\sum_{m=0}^{N-1} \left|\frac{\Delta X^i_{t_m} \Delta X^j_{t_m}}{2\Delta t}-\theta(X_{t_m})v\right|^2.
\end{equation}
Under the assumptions set forth in Remark \ref{re:Assumptions}, we can show that as $\Delta t\to 0$ and $T\to \infty$, the coefficients given by (\ref{eq:MuLS}) and (\ref{eq:SigLS}) converge to the true coefficients; $\tilde{\alpha}^i\to \alpha^i$ and $\tilde{\beta}^{i,j}\to\beta^{i,j}$.

If we define the matrices
\begin{equation}
\Theta=\begin{bmatrix}
\theta(X_{t_0})\\\theta(X_{t_1})\\\vdots\\\theta(X_{t_{N-1}})
\end{bmatrix},~~~~~\text{and}~~~~~D^i=\begin{bmatrix}
\Delta X^i_{t_0}\\ \Delta X^i_{t_1}\\\vdots\\ \Delta X^i_{t_{N-1}}
\end{bmatrix},
\end{equation}
We can express (\ref{eq:MuLS}) and (\ref{eq:SigLS}) concisely as
\[\tilde{\alpha}^i=\underset{v}{argmin}\left\|\frac{D^i}{\Delta t}-\Theta v\right\|~~~~\text{and}~~~~\beta^{i,j}=\underset{v}{argmin}\left\|\frac{D^i\odot D^j}{2\Delta t}-\Theta v\right\|.\]
(Here $D^i\odot D^j$ represents the Hadamard, or element-wise, product.) These equations are solved by $\tilde{\alpha}_i=\Delta t^{-1}\Theta^+D^i$ and $\tilde{\beta}_{i,j}=(2\Delta t)^{-1}\Theta^+(D^i\odot D^j)$, respectively. 
\begin{theorem}
	\label{th:Convergence}
	Let $X_t$ be an ergodic drift-diffusion process generated by the SDE (\ref{eq:Ito}). Consider the optimization problems (\ref{eq:MuLS}) and (\ref{eq:SigLS}) using data from a trajectory of length $T$ sampled with frequency $\Delta t$. Suppose the components of $\theta$ are linearly independent and span the subspace $\mathcal{F}$, and that the assumptions on the Ito-Taylor expansions outlined in Remark \ref{re:Assumptions} are met. If $\mu^i$ or $\Sigma^{i,j}$ lie in $\mathcal{F}$, then the vectors given by corresponding optimization converges in probability to the true coefficients as $T\to \infty$ and $\Delta t \to 0$. That is, $\tilde{\alpha}^i\to\alpha^i$ or $\tilde{\beta}^{i,j}\to\beta^{i,j}$.
\end{theorem}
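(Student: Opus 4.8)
The plan is to recast each optimization through its normal equations and analyze the estimator as a product of a Gram-matrix factor, controlled by ergodicity, and a right-hand side that splits into a deterministic bias of size $O(\Delta t)$ plus a purely stochastic error with variance $O(1/T)$. I will carry this out for the drift; the diffusion case is identical in structure, with the roles of $\mu^i$, (\ref{eq:WeakDrift}), (\ref{eq:StrongDrift}) played by $\Sigma^{i,j}$, (\ref{eq:WeakDiff}), (\ref{eq:StrongDiff}).

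Since the $\theta_p$ are linearly independent, $\Theta$ has full column rank for $T$ large, so the solution of (\ref{eq:MuLS}) is $\tilde\alpha^i = G_N^{-1}b_N$ with
\[
G_N := \tfrac1N\,\Theta^*\Theta, \qquad b_N := \tfrac1N\sum_{m=0}^{N-1}\theta(X_{t_m})^*\,\tfrac{\Delta X^i_{t_m}}{\Delta t},
\]
and likewise $\tilde\beta^{i,j}=G_N^{-1}\hat b_N$ with $\hat b_N := \frac1N\sum_m\theta(X_{t_m})^*\,\frac{\Delta X^i_{t_m}\Delta X^j_{t_m}}{2\Delta t} = \frac1{2N\Delta t}\Theta^*(D^i\odot D^j)$. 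Applying the ergodic limit (\ref{eq:ergIP}) entrywise to the products $\bar\theta_p\theta_q$ gives $G_N\to\langle\theta,\theta\rangle =: G$ almost surely as $T\to\infty$, and $G$ is positive definite because the $\theta_p$ are linearly independent in $L^2(\rho)$; hence $G_N$ is eventually invertible and $G_N^{-1}\to G^{-1}$ a.s.

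For $b_N$, I would write $\frac{\Delta X^i_{t_m}}{\Delta t} = \E[\tfrac{\Delta X^i_{t_m}}{\Delta t}\mid X_{t_m}] + \eta_m$, so $\E[\eta_m\mid X_{t_m}]=0$; by the Markov property $\eta_m$ is then a martingale difference for the trajectory's natural filtration. The weak expansion (\ref{eq:WeakDrift}), carried to sufficient order, gives $\E[\tfrac{\Delta X^i_{t_m}}{\Delta t}\mid X_{t_m}] = \mu^i(X_{t_m}) + e_{\Delta t}(X_{t_m})$ with $\|e_{\Delta t}\|_{L^2(\rho)}=O(\Delta t)$ (leading term $\tfrac12 L^0\mu^i$, the higher remainder $L^2(\rho)$-small by Remark \ref{re:Assumptions}), so by (\ref{eq:ergIP}), for fixed $\Delta t$, $\frac1N\sum_m\theta(X_{t_m})^*(\mu^i+e_{\Delta t})(X_{t_m})\to\langle\mu^i,\theta\rangle+\langle e_{\Delta t},\theta\rangle$ a.s. as $T\to\infty$, with $\|\langle e_{\Delta t},\theta\rangle\|=O(\Delta t)$ by Cauchy--Schwarz. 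For the stochastic part $M_N := \frac1N\sum_m\theta(X_{t_m})^*\eta_m$, the martingale-difference property makes the summands uncorrelated, so $\E\|M_N\|^2 = \frac1{N^2}\sum_m\E[\|\theta(X_{t_m})\|^2\,\E[\eta_m^2\mid X_{t_m}]]$; the strong expansion (\ref{eq:StrongDrift}) bounds $\E[\eta_m^2\mid X_{t_m}]$ by $O(1/\Delta t)$ (the Wiener increments contribute $\sim \Delta t$ to the conditional variance, scaled by $\Delta t^{-2}$; the remainder is lower order), so under the integrability hypotheses of Remark \ref{re:Assumptions}, $\E\|M_N\|^2 = O(\tfrac1{N\Delta t}) = O(1/T)$, and Chebyshev gives $M_N\to 0$ in probability as $T\to\infty$. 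Hence for fixed $\Delta t$, $b_N$ converges in probability to $\langle\mu^i,\theta\rangle+\langle e_{\Delta t},\theta\rangle$ as $T\to\infty$.

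Assembling via Slutsky's theorem, $\tilde\alpha^i = G_N^{-1}b_N \to G^{-1}(\langle\mu^i,\theta\rangle+\langle e_{\Delta t},\theta\rangle)$ in probability as $T\to\infty$; and since $\mu^i = \theta\alpha^i\in\mathcal{F}$ forces $\langle\mu^i,\theta\rangle = G\alpha^i$, this limit equals $\alpha^i + G^{-1}\langle e_{\Delta t},\theta\rangle = \alpha^i + O(\Delta t)$, which tends to $\alpha^i$ as $\Delta t\to 0$; because the variance bound on $M_N$ is uniform in $\Delta t$, the two limits may also be taken jointly. The diffusion case runs the same way: the conditional bias from (\ref{eq:WeakDiff}) is again $O(\Delta t)$, the double Ito integrals in (\ref{eq:StrongDiff}) make $\E[\eta_m^2\mid X_{t_m}]=O(1)$ so $\E\|M_N\|^2 = O(1/N) = O(\Delta t/T)$, and $\Sigma^{i,j}=\theta\beta^{i,j}\in\mathcal{F}$ gives $\langle\Sigma^{i,j},\theta\rangle = G\beta^{i,j}$, hence $\tilde\beta^{i,j}\to\beta^{i,j}$. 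I expect the stochastic term $M_N$ to be the main obstacle: one must use the martingale-difference structure to annihilate the cross terms, the strong Ito--Taylor expansion to size the conditional variance, and the $L^2(\rho)$ remainder control of Remark \ref{re:Assumptions} to see the resulting bound decays like $1/T$ (resp. $\Delta t/T$) uniformly in $\Delta t$, so the two limits interact cleanly.
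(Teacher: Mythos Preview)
Your proposal is correct and is essentially the same argument the paper uses: the paper also passes to the normal equations, controls $\tfrac{1}{N}\Theta^*\Theta$ by ergodicity, and then bounds the mean and variance of $\tfrac{1}{N}\Theta^*E$ via the weak expansion (\ref{eq:WeakDrift})/(\ref{eq:WeakDiff}) for the $O(\Delta t)$ bias and the strong expansion (\ref{eq:StrongDrift})/(\ref{eq:StrongDiff}) for the $O(1/T)$ (resp.\ $O(\Delta t/T)$) variance, exactly as in Theorems~\ref{th:DriftOrd1} and~\ref{th:DiffOrd1}. Your write-up is in fact slightly more explicit than the paper's on one point: in the variance step the paper tacitly uses that the cross terms $\E[\theta^*(X_{t_n})e_{t_n}\,e_{t_m}^*\theta(X_{t_m})]$ vanish for $n\neq m$, which is precisely the martingale-difference property of your $\eta_m$; your invocation of the Markov property and Slutsky's theorem makes that step, and the joint $\Delta t\to 0$, $T\to\infty$ limit, cleaner.
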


\begin{comment}
\begin{proof}
	\textcolor{red}{First prove that we can exchange limits.}
	Let $N=T/\Delta t$ be the number of data samples. Consider the the optimization (\ref{eq:MuLS}), which is solved by
	\[\tilde{\alpha}^i=\frac{1}{\Delta t}(\Theta^*\Theta)^{-1}\Theta^* D^i=\left(\frac{1}{N}\Theta^*\Theta\right)^{-1}\left(\frac{1}{T}\Theta^*D^i\right).\]
	By ergodicity, we have
	\[\lim_{N\to\infty}\frac{1}{N}\Theta^*\Theta=\langle \theta,\theta\rangle.\]
	Using the definition of the Ito integral, we have
	\[\lim_{\Delta t\to0} \Theta^* D^i=\lim_{\Delta t \to 0} \sum_{n=1}^{T/\Delta t}\theta^*(X_{t_n})(X^i_{t_{n+1}}-X^i_{t_n})=\int_0^T \theta^*dX^i_t.\]
	We also have, using ergodicity and the fact that $\frac{1}{T}\int_0^T\sigma^{i,j}dW_t^j\to0$,
	\[\lim_{T\to \infty} \frac{1}{T} \int_0^T\theta^*dX^i_t=\lim_{t\to\infty}\frac{1}{T}\left(\int_0^T \theta^*\mu^idt+\sum_{j=1}^d \int_0^T \sigma^{i,j}dW^j_t\right)=\langle \theta,\mu^i\rangle=\langle\theta,\theta\rangle \alpha^i.\]
	Then $\frac{1}{T}\Theta^*D^i\to \langle \theta,\theta\rangle \alpha^i,$ so $\tilde{\alpha}^i\to\langle \theta,\theta\rangle^{-1}\langle\theta,\theta\rangle\alpha^i=\alpha^i$.
	The proof for $\beta^{i,j}$ is similar, using the convergence
	\[\lim_{\Delta t\to 0}\sum_{n=1}^{T/\Delta t}\theta^*(X_{t_n})(X_{t_{n+1}}^i-X_{t_n}^i)(X_{t_{n+1}}^j-X_{t_n}^j)=\int_0^T \theta^*d[X^i,X^j]_t=\int_0^T\theta^*\Sigma^{i,j}dt.\]	
\end{proof}
\end{comment}
Theorem \ref{th:Convergence} was shown in \cite{boninsegna2018sparse} and will be implied by the stronger Theorems \ref{th:DriftOrd1} and \ref{th:DiffOrd1} which give rates of convergence. However, we will demonstrate the main reasoning behind the proof, as it will be informative to our later analysis. By the assumptions of Theorem \ref{th:Convergence} we have $\Theta$ has full rank and $\mu=\theta \alpha^i,$ $\Sigma^{i,j}=\theta \beta^{i,j}$.
\[\tilde{\alpha}^i=(\Theta^*\Theta)^{-1}\Theta^*\frac{D^i}{\Delta t}=\left(\frac{1}{N}\Theta^*\Theta\right)^{-1}\left(\frac{1}{N\Delta t}\Theta^*D^i\right),\]
where $N=T/\Delta t$ is the number of data samples. The first quantity can be evaluated using ergodicity, as $N\to \infty$
\[\frac{1}{N}\Theta^*\Theta=\frac{1}{N}\sum_{m=0}^{N-1}\theta^*(X_{t_m})\theta(X_{t_m})\xrightarrow{N}\langle \theta,\theta\rangle.\]
For the second expression, the definition of the stochastic integral gives us
\[\Theta^*D^i=\sum_{m=0}^{N-1}\theta^*(X_m)(X^i_{t_{m+1}}-X^i_{t_m})\xrightarrow{\Delta t} \int_{t_0}^{t_0+T} \theta^*dX^i\]
as $\Delta t\to 0$. Finally, using (\ref{eq:Ito}) and (\ref{eq:ergIP}), we can show
\begin{equation}
\label{eq:FakeConvergence}
\frac{1}{N\Delta t}\Theta^*D^i\xrightarrow{\Delta t}\frac{1}{T}\int_{t_0}^{t_0+T} \theta^*dX^i\xrightarrow{T}\langle \mu,\theta\rangle=\langle \theta,\theta\rangle\alpha^i
\end{equation}
as $\Delta t\to 0$ and $T\to \infty$. The limit as $\Delta t \to 0$ gives the convergence of the sum to the stochastic integral and the limit as $T\to \infty$ allows us to sample almost everywhere on the stationary measure for the ergodic convergence. Similarly, we can use the convergence
\[\sum_{m=0}^{N-1}\theta^*(X_{t_m})(X^i_{t_{m+1}}-X^i_{t_m})(X^j_{t_{m+1}}-X^j_{t_m})\xrightarrow{\Delta t} \int_{t_0}^{t_0+T}\theta^*d[X^i,X^j],~~~~\Delta t\to 0\]
to show that $\frac{1}{2N\Delta t}\Theta^*(D^i\odot D^j)\to \langle \Sigma^{i,j},\theta\rangle=\langle \theta,\theta\rangle\beta^{i,j}$. (Here $[X,Y]_t$ is the quadratic covariation process of $X_t$, and $Y_t$.) This would establish the result, except that we used the iterated limits $\Delta t \to 0$ and $T\to \infty$ in (\ref{eq:FakeConvergence}) without showing the double limit exists. This is where we would use the integrability assumptions in Remark \ref{re:Assumptions}, which are used in the proofs of Theorems \ref{th:DriftOrd1} and \ref{th:DiffOrd1}.

\bigskip

Theorem \ref{th:Convergence} demonstrates how the least squares solutions converge to the true coefficients of the SDE. However, the SINDy algorithm finds a sparse solution, which can greatly improve the accuracy of the results over the least squares solution. To set this up, the two optimizations (\ref{eq:MuLS}) and (\ref{eq:SigLS}) can be summarized using the normal equations,
\begin{equation}
\label{eq:MuNormal}
\Theta^*\Theta\tilde{\alpha}^i=\frac{1}{\Delta t}\Theta^*D^i\
\end{equation}
and
\begin{equation}
\label{eq:SigNormal}
\Theta^*\Theta\tilde{\beta}^{i,j}=\frac{1}{2\Delta t}\Theta^*(D^i\odot D^j).
\end{equation}
We can then solve equations (\ref{eq:MuNormal}) and (\ref{eq:SigNormal}) using a sparse solver, such as the one proposed in \cite{brunton2016discovering} to obtain a sparse solution.

\section{Numerical Analysis of Stochastic SINDy}
\label{sec:FirstOrder}
Theorem \ref{th:Convergence} claims that as $\Delta t\to 0$ and $T\to \infty$, the coefficients given by the (\ref{eq:MuLS}) and (\ref{eq:SigLS}) converge to the true parameters of the SDE (\ref{eq:Ito}) as $\Delta t\to 0$ and $T\to \infty$. However, for real experiments, there will be limits to the sampling frequency and the length of trajectory for which we can acquire data. In \cite{frishman2020learning}, the trajectory of the SDE was interpreted as a noisy transmission channel, and estimates on the relative squared errors were derived based on the information content of the signal.
 
In this section, we will use an alternate approach of deriving the error in the estimate based on the Ito-Taylor expansion of the SDE. We will use look at both the bias and variance of the approximations for finite $\Delta t$ and $T$. In particular, we derive the error with explicit constants (up to the leading order) in terms of the dictionary $\theta$ and the functions $\mu$ and $\sigma$.

In this setting, we will be using both ``big `O'" and ``little `o'" notation. The ``big `O'" notation will be used to denote convergence as $\Delta t\to 0$. These terms will come from the higher order error terms in the the estimators of $\mu^i(X_t)$ and $\Sigma^{i,j}(X_t)$. In particular, the constant in the ``big `O'" will depend only on the parameters of the SDE; it does not depend on the initial condition, trajectory length, or realization of the trajectory.
	
The ``little `o'" will denote convergence with respect to $T$. Specifically $o(1)$ denotes a function that goes to zero as $T\to\infty$. This will capture the ergodic convergence; the $o(1)$ term will be the error that comes from the finite trajectory failing to completely sample the ergodic measure.

The SINDy algorithm will give us vectors of coefficients, $\tilde{\alpha}^i$ and $\tilde{\beta}^{i,j}$, for the system. We will be interested in the error of these vectors relative to the true coefficients $\alpha^i$ and $\beta^{i,j}$,
\[err=\tilde{\alpha}^i-\alpha^i~~~~\text{or}~~~~err=\tilde{\beta}^{i,j}-\beta^{i,j}.\]
(We note that this error is specifically for the vector $\alpha^i$ or $\beta^{i,j}$ being estimated, even though it is not indexed. Since each vector is estimated separately, there should be no confusion.) This error will be a random variable depending on the realization of the system. To evaluate the performance of the algorithms, we will use the mean and variance of this error:
\[err_{mean}=\|\E(err)\|_2~~~~~\text{and}~~~~~err_{var}=Var(err)=\E(\|err-\E(err)\|_2^2).\]

The mean and variance of the error measure the bias and spread in the estimates $\tilde{\alpha}^i$ and $\tilde{\beta}^{i,j}$. These errors in the coefficients can be quantified using the errors in the estimates of $\mu^i$ and $\Sigma^{i,j}$ given in (\ref{eq:FD}) and (\ref{eq:FDsquared}) at each step. We will present the analysis for the drift coefficients, $\alpha^i$, noting that analysis for the diffusion follows the same path.

\subsection{Drift}
As mentioned, the error in $\tilde{\alpha}^i$ stems from the error in the approximation in (\ref{eq:FD})
\[\mu^i(X_{t_n})\approx\frac{X_{t_{n+1}}-X_{t_n}}{\Delta t}.\]
We can define the error
\[e_{t_n}=\frac{X^i_{t_{n+1}}-X^i_{t_n}}{\Delta t}-\mu^i(X_{t_n}).\]
The order of the error, $e_t$, at each time step will directly determine the error in the coefficients $\tilde{\alpha^i}$. We can use Ito-Taylor expansions for $X_t$ to bound both $\E(|e_t|)$ and $\E(|e_t|^2)$. The weak Ito-Taylor expansion (\ref{eq:WeakExp}) gives us
\begin{equation}
\label{eq:DriftErrm}
\E(e_t\,|\,X_t)=\frac{1}{\Delta t}\left(\mu^i(X_t)\Delta t+L^0\mu^i(X_t)\frac{\Delta t^2}{2}+O(\Delta t^3)\right)-\mu^i(X_t)=L^0\mu^i(X_t)\frac{\Delta t}{2}+O(\Delta t^2).
\end{equation}
Similarly, we can use the strong truncation (\ref{eq:StrongDrift}) to obtain
\[e_t=\sum_{m=1}^d \sigma^{i,m}(X_t)\frac{\Delta W^m_t}{\Delta t}+\frac{R_t}{\Delta t},\]
where $\E(|R_t|^2|X_t)=O(\Delta t^2)$. Then, taking the expectance of $e_t^2$, we get
\begin{equation}
\label{eq:DriftErrSq}
\E(|e_t|^2\,|\,X_t)=\sum_{m=1}^d \frac{\sigma^{i,m}(X_t)^2}{\Delta t}+O\left(\Delta t^{\frac{-1}{2}}\right).
\end{equation}
Now, let $E$ be the matrix containing the time samples of $e_t$, 
\[E=\begin{bmatrix}e_{t_0}&e_{t_1}&\hdots &e_{t_{N-1}}\end{bmatrix}^T=\frac{D^i}{\Delta t}-\Theta\alpha^i,\]
using $\theta(X_t)\alpha^i=\mu^i(X_t)$. Then we have
\begin{equation}
\label{eq:Err1}
err=\tilde{\alpha}^i-\alpha^i=\Theta^+\frac{D^i}{\Delta t}-\Theta^+\Theta\alpha=(\Theta^*\Theta)^{-1}\Theta^*E.
\end{equation}
Using ergodicity, we have
\begin{equation}
\label{eq:ThetaIP}
\left(\frac{1}{N}\Theta^*\Theta\right)^{-1}=\left(\langle \theta,\theta\rangle +o(1)\right)^{-1}=\langle\theta,\theta\rangle^{-1}+o(1),
\end{equation}
which allows us to evaluate the first term in (\ref{eq:Err1}):
\begin{equation}
\label{eq:Error}
err=(\langle \theta,\theta\rangle^{-1}+o(1))\left(\frac{1}{N}\Theta^*E\right).
\end{equation}
Bounding the mean and variance will follow from bounds on the mean and variance of $\frac{1}{N}\Theta^*E$.

\begin{theorem}
	\label{th:DriftOrd1}
	Consider the optimization problem given by (\ref{eq:FD}) and (\ref{eq:MuLS}). Then the bias is bounded by
	\[err_{mean}\leq \frac{C_1}{2}\left(\|L^0\mu^i\|_2+O(\Delta t)+o(1)\right)\Delta t\]
	and
	\[err_{var}\leq\frac{C_2}{T}\left(\sum_{m=1}^d\|\sigma^{i,m}\|_4^2+O\left(\Delta t^{\frac{1}{2}}\right)+o(1)\right),\]
	where 
	\begin{equation}
	\label{eq:Constants}
	C_1=\|\langle\theta,\theta\rangle^{-1}\|_2\|\theta\|_2~~~\text{and}~~~C_2=\|\langle\theta,\theta\rangle^{-1}\|_2^2\|\theta\|_4^2
	\end{equation}
	depend only on the choice of $\theta$.
\end{theorem}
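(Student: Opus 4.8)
The plan is to route everything through the identity \eqref{eq:Error}, $err=(\langle\theta,\theta\rangle^{-1}+o(1))\,\tfrac1N\Theta^*E$, so that both the bias and the variance are read off from the first two moments of the single random vector $b_N:=\tfrac1N\Theta^*E=\tfrac1N\sum_{m=0}^{N-1}\theta^*(X_{t_m})e_{t_m}$. The structural observation I would build on is that the one-step error $e_{t_m}=\Delta X^i_{t_m}/\Delta t-\mu^i(X_{t_m})$ is a function of $X_{t_m}$ and the increment $X_{t_{m+1}}-X_{t_m}$ alone, so, writing $\mathcal{G}_{t_m}:=\sigma(X_s:s\le t_m)$, each summand splits as $\theta^*(X_{t_m})e_{t_m}=\theta^*(X_{t_m})\,\E(e_{t_m}\mid X_{t_m})+Z_m$ with $Z_m:=\theta^*(X_{t_m})\big(e_{t_m}-\E(e_{t_m}\mid X_{t_m})\big)$, and the Markov property gives $Z_m\in\mathcal{G}_{t_{m+1}}$ together with $\E(Z_m\mid\mathcal{G}_{t_m})=0$; that is, the $Z_m$ form a martingale difference sequence. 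This is precisely what turns a sum of $N$ strongly fluctuating terms into a quantity of variance $O(1/N)$.

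For the bias, I would take expectations in \eqref{eq:Error}. Writing $(\tfrac1N\Theta^*\Theta)^{-1}=\langle\theta,\theta\rangle^{-1}+\Delta_N$ with $\Delta_N\to0$ almost surely and in $L^2$ (using ergodicity \eqref{eq:erg} and the integrability hypotheses of Remark~\ref{re:Assumptions}), we get $\E(err)=\langle\theta,\theta\rangle^{-1}\E(b_N)+\E(\Delta_N b_N)$. The tower property and the weak one-step estimate \eqref{eq:DriftErrm} give $\E\big(\theta^*(X_{t_m})e_{t_m}\big)=\tfrac{\Delta t}{2}\,\E\big(\theta^*(X_{t_m})L^0\mu^i(X_{t_m})\big)+O(\Delta t^2)$, and averaging over $m$ with ergodicity (stationarity up to an $o(1)$ transient) turns $\E(b_N)$ into $\tfrac{\Delta t}{2}\langle L^0\mu^i,\theta\rangle+O(\Delta t^2)+o(1)$, while $\E(\Delta_N b_N)=o(1)$ by Cauchy--Schwarz since $\Delta_N\to0$ in $L^2$ and $b_N$ is bounded in $L^2$. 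Taking norms and using submultiplicativity followed by componentwise Cauchy--Schwarz in $L^2(\rho)$, $\|\langle\theta,\theta\rangle^{-1}\langle L^0\mu^i,\theta\rangle\|_2\le\|\langle\theta,\theta\rangle^{-1}\|_2\,\|\theta\|_2\,\|L^0\mu^i\|_2$, produces exactly $\tfrac{C_1}{2}\|L^0\mu^i\|_2\,\Delta t$, with the leftover $O(\Delta t^2)+o(1)$ giving the rest.

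For the variance, decompose $err-\E(err)=\langle\theta,\theta\rangle^{-1}\big(b_N-\E b_N\big)+\big(\Delta_N b_N-\E(\Delta_N b_N)\big)$, so that the leading contribution to $err_{var}$ is $\|\langle\theta,\theta\rangle^{-1}\|_2^2\,Var(b_N)$, the fluctuation of $\Delta_N$ contributing only lower-order terms controlled through $\E\|\Delta_N b_N\|^2$ by Cauchy--Schwarz and the integrability hypotheses. To bound $Var(b_N)$ I would split $b_N$ along the decomposition above: the conditional-mean part $\tfrac1N\sum_m\theta^*(X_{t_m})\E(e_{t_m}\mid X_{t_m})$ has terms of size $O(\Delta t)$ and is an ergodic average, so its variance is $O(\Delta t^3/T)$ (using the $O(1/N)$ decay of the variance of ergodic averages) and is absorbed into the $O(\Delta t^{1/2})$ slot; the dominant part is $\tfrac1N\sum_m Z_m$, and because the $Z_m$ are a martingale difference sequence the off-diagonal covariances vanish, so $Var\big(\tfrac1N\sum_m Z_m\big)=\tfrac1{N^2}\sum_m\E\|Z_m\|_2^2=\tfrac1{N^2}\sum_m\E\big(\|\theta(X_{t_m})\|_2^2\,Var(e_{t_m}\mid X_{t_m})\big)$. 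Feeding in the strong one-step estimate \eqref{eq:DriftErrSq}, $Var(e_{t_m}\mid X_{t_m})\le\sum_k\sigma^{i,k}(X_{t_m})^2/\Delta t+O(\Delta t^{-1/2})$, then Cauchy--Schwarz $\int\|\theta\|_2^2\,|\sigma^{i,k}|^2\,d\rho\le\|\theta\|_4^2\,\|\sigma^{i,k}\|_4^2$ together with ergodicity, and finally $N\Delta t=T$, yields $\tfrac1T\big(\|\theta\|_4^2\sum_k\|\sigma^{i,k}\|_4^2+O(\Delta t^{1/2})+o(1)\big)$; multiplying by $\|\langle\theta,\theta\rangle^{-1}\|_2^2$ and recalling $C_2=\|\langle\theta,\theta\rangle^{-1}\|_2^2\|\theta\|_4^2$ gives the claim.

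\textbf{The main obstacle} is not the algebra but the two-parameter bookkeeping. One must genuinely justify replacing the random Gram normalization $(\tfrac1N\Theta^*\Theta)^{-1}$ by $\langle\theta,\theta\rangle^{-1}$ with errors negligible in \emph{both} the expectation sense (for $err_{mean}$) and the mean-square sense (for $err_{var}$); this is exactly where the square-integrability assumptions of Remark~\ref{re:Assumptions}, and the ergodic/mixing theory behind \eqref{eq:erg}, are used — one needs $L^2$ (and, for the variance remainder, higher-moment) control of $\tfrac1N\Theta^*\Theta$ and the $O(1/N)$ variance decay of ergodic averages. The other point that must not be lost is the martingale-difference structure: a crude triangle-inequality bound on $\tfrac1N\sum_m\theta^*(X_{t_m})e_{t_m}$ would only give $Var=O(1)$, since each term is of size $O(\Delta t^{-1/2})$; it is the vanishing of the off-diagonal covariances that produces the essential $1/T$ rate. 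The remaining ingredients — the pointwise Ito-Taylor estimates \eqref{eq:DriftErrm}, \eqref{eq:DriftErrSq}, the Cauchy--Schwarz steps, and the appearance of $C_1$ and $C_2$ — are mechanical.
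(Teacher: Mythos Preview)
Your proposal is correct and follows essentially the same route as the paper: both use the identity \eqref{eq:Error} to reduce to controlling the first two moments of $\tfrac{1}{N}\Theta^*E$, apply the tower property together with the one-step Ito--Taylor bounds \eqref{eq:DriftErrm} and \eqref{eq:DriftErrSq}, invoke ergodicity, and finish with Cauchy--Schwarz to extract $C_1$ and $C_2$. If anything you are more explicit than the paper on two points it treats informally --- the replacement of $(\tfrac{1}{N}\Theta^*\Theta)^{-1}$ by $\langle\theta,\theta\rangle^{-1}$ inside an expectation, and the martingale-difference structure that kills the off-diagonal covariances (the paper simply writes $\E\|\sum_n\theta^*(X_{t_n})e_{t_n}\|_2^2\le\E\sum_n\|\theta(X_{t_n})\|_2^2|e_{t_n}|^2$ without comment) --- so your version is in fact a tightening of the same argument rather than a different one.
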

As stated in Theorem \ref{th:DriftOrd1}, in expectation, the accuracy of our estimate depends primarily on the sampling period $\Delta t$, and not on the length of the trajectory. The length of the trajectory instead controls the variance of the estimate, which is proportional to $1/T$. Up to the leading term, the variance does not depend on the sampling period. These results previously appeared in \cite{frishman2020learning,bruckner2020inferring}, although our proof is different. This pattern will persist as we develop higher order methods for estimating the drift, where the sampling frequency determines the bias and the length of the trajectory determines the variance.

\begin{proof}
	For the mean error, we will need to bound the quantity $\frac{1}{N}\left\|\E\left(\Theta^*E\right)\right\|$. We have
	\[\E\left(\frac{1}{N}\Theta^*E\right)=\E\left(\frac{1}{N}\sum_{n=0}^{N-1}\theta^*(X_{t_n})e_{t_n}\right)=\E\left(\frac{1}{N}\sum_{n=0}^{N-1}\theta^*(X_{t_n})\E(e_{t_n}\,|\,X_{t_n})\right).\]
	Then, using ergodicity and (\ref{eq:DriftErrm}), we obtain
	\begin{align*}
	\E\left(\frac{1}{N}\Theta^*E\right)&=\E\left(\frac{1}{N}\sum_{n=0}^{N-1}\theta^*(X_{t_n})\left(\frac{\Delta t}{2}L^0\mu^i(X_{t_n})+O(\Delta t^2)\right)\right)
	\\&=\frac{\Delta t}{2}\left(\langle L^0\mu^i,\theta\rangle+o(1)\right) +O(\Delta t^2).
	\end{align*}
	Finally, using (\ref{eq:Error}), we get
	\begin{align*}
	\|\E(err)\|&=\left\|\left(\langle \theta,\theta\rangle^{-1}+o(1)\right)\right\|_2\left(\frac{\Delta t}{2}\left(\langle L^0\mu^i,\theta \rangle+o(1)\right)+O(\Delta t^2)\right)\\
	&\leq \|\langle\theta,\theta\rangle^{-1}\|_2\left(\|\theta\|_2\|L^0\mu^i\|_2+O(\Delta t)+o(1)\right)\frac{\Delta t}{2}=C_1\left(\|L^0\mu^i\|_2+O(\Delta t)+o(1)\right)\frac{\Delta t}{2}
	\end{align*}
	This bounds the mean error. To find the variance, we have
	\begin{align*}
	Var\left(\frac{1}{N}\Theta^*E\right)&\leq\E\left(\left\|\frac{1}{N}\Theta^*E\right\|_2^2\right)= \E\left(\left\|\sum_{n=0}^{N-1}\theta^*(X_{t_n})e_{t_n}\right\|_2^2\right)\leq\E\left(\sum_{n=0}^{N-1}\|\theta^*(X_{t_n})\|_2^2|e_{t_n}|^2\|\right)\\
	&=\E\left(\sum_{n=0}^{N_1}\|\theta(X_{t_n})\|_2^2\,\E\left(|e_{t_n}|^2\,|\,X_{t_n}\right)\right)
	\end{align*}
	Now, using (\ref{eq:DriftErrSq}) with this equation, we have
	\begin{align*}
	Var\left(\frac{1}{N}\Theta^*E\right)&\leq\E\left(\frac{1}{N^2}\sum_{n=0}^{N-1}\|\theta(X_{t_n})\|_2^2\left(\sum_{m=1}^d\frac{|\sigma^{i,m}|^2}{\Delta t}+O\left(\Delta t^{\frac{-1}{2}}\right)\right)\right)\\
	&=\frac{1}{N\Delta t}\left(\sum_{m=1}^d\langle (\sigma^{i,m})^2,\|\theta\|_2^2\rangle+O\left(\Delta t^{\frac{1}{2}}\right)+o(1)\right)\\
	&\leq\frac{1}{T}\|\theta\|_4^2\left(\sum_{m=1}^d\|\sigma^{i,m}\|_4^2+O\left(\Delta t^{\frac{1}{2}}\right)+o(1)\right).
	\end{align*}
	Then 
	\begin{align*}
	Var(err)&=\left(\|\langle \theta,\theta\rangle^{-1}\|_2^2+o(1)\right)\|\theta\|_4^2\left(\frac{1}{T}\left(\sum_{m=1}^d\|\sigma^{i,m}\|_4^2+O\left(\Delta t^{\frac{1}{2}}\right)+o(1)\right)\right)\\
	&=\frac{\|\langle\theta,\theta\rangle^{-1}\|_2^2\|\theta\|_4^2}{T}\left(\sum_{m=1}^d\|\sigma^{i,m}\|_4^2+O\left(\Delta t^{\frac{1}{2}}\right)+o(1)\right)
	\\&=\frac{C_2}{T}\left(\sum_{m=1}^d\|\sigma^{i,m}\|_4^2+O\left(\Delta t^{\frac{1}{2}}\right)+o(1)\right)
	\end{align*}
\end{proof}

\subsection{Diffusion}
The analysis of the diffusion coefficients follows the same argument. The approximation for $\Sigma^{i,j}$ given in (\ref{eq:FDsquared}) is
\[\Sigma^{i,j}(X_{t_m})\approx\frac{(X^i_{t_{m+1}}-X^i_{t_m})(X^j_{t_{m+1}}-X^j_{t_m})}{2\Delta t}=\frac{\Delta X^i_{t_m}\Delta X^j_{t_m}}{2\Delta t}.\]
Then we can define the error
\[e_t=\frac{(X^i_{t+\Delta t}-X^i_{t})(X^j_{t+\Delta t}-X^j_{t})}{2\Delta t}-\Sigma^{i,j}(X_t).\]
We can use the weak Ito-Taylor expansion (\ref{eq:WeakDiff}) to bound $\E(e_t\,|\,X_t)$:
\begin{equation}
\label{eq:DiffErrm}
\E(e_t\,|\,X_t)=g(X_t)\frac{\Delta t}{2}+O(\Delta t^2),~~~~~~~g=L^0\Sigma^{i,j}+\mu^i\mu^j+\sum_{k=1}^d \left(\Sigma^{i,k}\frac{\partial \mu^j}{\partial x^k}+\Sigma^{j,k}\frac{\partial \mu^i}{\partial x^k}\right).
\end{equation}
Similarly, the strong Ito-Taylor expansion (\ref{eq:StrongDiff}) gives us (see appendix \ref{App:ErrDiff1})
\begin{equation}
\label{eq:DiffErrsq}
\E(|e_t|^2\,|\,X_t)=\Sigma^{i,i}(X_t)\Sigma^{j,j}(X_t)+\Sigma^{i,j}(X_t)^2+O(\Delta t^{\frac{1}{2}}).
\end{equation}
\begin{theorem}
	\label{th:DiffOrd1}
	Consider the optimization problem given by (\ref{eq:FDsquared}) and (\ref{eq:SigLS}). Then the mean error is bounded by
	\[err_{mean}= \frac{C_1}{2}(\|g\|+O(\Delta t)+o(1))\Delta t,\]
	where
	\[g=L^0\Sigma^{i,j}+\mu^i\mu^j+\sum_{k=1}^d\left(\Sigma^{i,k}\frac{\partial \mu^j}{\partial x^k}+\Sigma^{j,k}\frac{\partial \mu^i}{\partial x^k}\right).\]
	The variance is bounded by
	\[err_{var}=\frac{C_2}{4}\left(\left\|\Sigma^{i,i}\Sigma^{j,j}+(\Sigma^{i,j})^2\right\|+O(\Delta t^{\frac{1}{2}})+o(1)\right)\frac{\Delta t}{T}.\]
	The constants $C_1$ and $C_2$ are the same as those given in (\ref{eq:Constants}).
\end{theorem}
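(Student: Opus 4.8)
The plan is to mirror the proof of Theorem~\ref{th:DriftOrd1} line for line, replacing the drift residual by the diffusion residual and feeding in the diffusion error expansions (\ref{eq:DiffErrm}) and (\ref{eq:DiffErrsq}) in place of (\ref{eq:DriftErrm}) and (\ref{eq:DriftErrSq}). First I would collect the per-step errors $e_{t_n}=\frac{\Delta X^i_{t_n}\Delta X^j_{t_n}}{2\Delta t}-\Sigma^{i,j}(X_{t_n})$ into the column vector $E=\frac{D^i\odot D^j}{2\Delta t}-\Theta\beta^{i,j}$, using $\theta(X_{t_n})\beta^{i,j}=\Sigma^{i,j}(X_{t_n})$; since (\ref{eq:SigLS}) is solved by $\tilde\beta^{i,j}=(2\Delta t)^{-1}\Theta^+(D^i\odot D^j)$, the algebra of (\ref{eq:Err1})--(\ref{eq:Error}) carries over verbatim and gives $err=\tilde\beta^{i,j}-\beta^{i,j}=(\Theta^*\Theta)^{-1}\Theta^*E=(\langle\theta,\theta\rangle^{-1}+o(1))\,\frac{1}{N}\Theta^*E$, where the $o(1)$ is the ergodic error in $\frac{1}{N}\Theta^*\Theta\to\langle\theta,\theta\rangle$. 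Everything then reduces to bounding the mean and variance of $\frac{1}{N}\Theta^*E$.

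For the bias, I would take expectations term by term, condition each summand on $X_{t_n}$ via the tower property, and substitute the weak expansion (\ref{eq:DiffErrm}), $\E(e_{t_n}\,|\,X_{t_n})=\frac{\Delta t}{2}g(X_{t_n})+O(\Delta t^2)$. Ergodicity converts the empirical average into $\frac{\Delta t}{2}\big(\langle g,\theta\rangle+o(1)\big)+O(\Delta t^2)$, and submultiplicativity of the operator norm together with the Cauchy--Schwarz bound $\|\langle g,\theta\rangle\|_2\le\|\theta\|_2\|g\|_2$ then give the stated bias estimate with $C_1=\|\langle\theta,\theta\rangle^{-1}\|_2\|\theta\|_2$. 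For the variance, I would use $Var(\frac{1}{N}\Theta^*E)\le\E\|\frac{1}{N}\Theta^*E\|_2^2$, expand the square, discard the off-diagonal terms, and reduce to $\frac{1}{N^2}\sum_n\|\theta(X_{t_n})\|_2^2\,\E(|e_{t_n}|^2\,|\,X_{t_n})$. The decisive input here is the strong expansion (\ref{eq:DiffErrsq}), which shows $\E(|e_{t_n}|^2\,|\,X_{t_n})=\Sigma^{i,i}\Sigma^{j,j}+(\Sigma^{i,j})^2+O(\Delta t^{1/2})$ is $O(1)$, rather than the $O(\Delta t^{-1})$ behaviour seen in the drift case. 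Applying ergodicity, using $\frac{1}{N}=\frac{\Delta t}{T}$, and bounding $\langle\,\cdot\,,\|\theta\|_2^2\rangle\le\|\theta\|_4^2\|\cdot\|_2$ then produces a variance of order $\frac{\Delta t}{T}$ with constant $C_2=\|\langle\theta,\theta\rangle^{-1}\|_2^2\|\theta\|_4^2$ --- the qualitatively new feature being that, unlike for the drift, refining the sampling period here also shrinks the variance.

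The main obstacle, exactly as in Theorem~\ref{th:DriftOrd1}, is justifying that the off-diagonal terms $\E\langle\theta^*(X_{t_n})e_{t_n},\theta^*(X_{t_m})e_{t_m}\rangle$ with $n\ne m$ may be discarded: writing $a_n=\theta^*(X_{t_n})e_{t_n}$, the quantity $\E\|\sum_n a_n\|_2^2$ is not a sum of squares, so one must check these cross terms are of strictly lower order than the diagonal sum. The argument is that for $n<m$ the factor $\theta^*(X_{t_n})e_{t_n}$ is $\mathcal{F}_{t_m}$-measurable while $\E(e_{t_m}\,|\,\mathcal{F}_{t_m})=\E(e_{t_m}\,|\,X_{t_m})$ is only $O(\Delta t)$ by the weak expansion, so each cross term carries an extra factor of $\Delta t$ compared with a diagonal term, and the ergodicity/mixing of $X_t$ (via the integrability hypotheses of Remark~\ref{re:Assumptions}) keeps the double sum dominated by the diagonal. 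The remaining steps --- checking that the $O(\Delta t^2)$, $O(\Delta t^{1/2})$, and $o(1)$ remainders stay controlled once integrated against the ergodic measure, and the second-moment computation (\ref{eq:DiffErrsq}) deferred to the appendix --- are routine under the assumptions of Remark~\ref{re:Assumptions}.
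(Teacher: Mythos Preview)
Your proposal is correct and follows exactly the paper's approach: the paper's own proof of Theorem~\ref{th:DiffOrd1} consists of the single sentence ``The proof follows that of Theorem~\ref{th:DriftOrd1}, except using equations (\ref{eq:DiffErrm}) and (\ref{eq:DiffErrsq}) to bound $|\E(e_t\,|\,X_t)|$ and $\E(|e_t|^2\,|\,X_t)$,'' and you have spelled this out in full detail. In fact your discussion of the off-diagonal cross terms is more careful than the paper itself, which in the proof of Theorem~\ref{th:DriftOrd1} passes from $\|\sum_n a_n\|_2^2$ to $\sum_n\|a_n\|_2^2$ without comment.
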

\begin{proof}
	The proof follows that of Theorem \ref{th:DriftOrd1}, except using equations (\ref{eq:DiffErrm}) and (\ref{eq:DiffErrsq}) to bound $\left|\E(e_t\,|\,X_t)\right|$ and $\E\left(|e_t|^2\,|\,X_t\right)$, respectively.
\end{proof}
Similar to Theorem \ref{th:DriftOrd1}, the argument above shows that the mean error converges with order $\Delta t$. However, unlike the estimate for the drift, when estimating the diffusion the variance is proportional to both $\Delta t$ and $1/T$. Similar to the drift, these results were shown previously in \cite{frishman2020learning,bruckner2020inferring}. Later, we will see that the higher order estimates for the diffusion will also have variance proportional to $\Delta t/T$.

\section{Higher Order Methods}
\label{sec:HigherOrder}
	From Theorems \ref{th:DriftOrd1} and \ref{th:DiffOrd1} we can see that the quantities $\Delta t$, $T$, $C_1$, and $C_2$ will control the magnitude of the error. The constants, $C_1$ and $C_2$, depend only on the choice of the dictionary $\theta$, which determines the conditioning of the problem. The SINDy algorithm also uses a sparsity promoting algorithms which can improve the conditioning of the problem and force many of the coefficients to zero, which can reduce the error \cite{brunton2016discovering},\cite{boninsegna2018sparse}. However, even if the sparsity promoting algorithm chooses all of the correct coefficients, we have just shown that there is still a limit to the accuracy of the estimation determined by the sampling frequency and trajectory. The primary purpose of this section is to analyze alternate methods of approximating $\mu^i$ and $\Sigma^{i,j}$ which can improve the performance of SINDy (with respect to $\Delta t$).

	The methods above resulted from first order approximations (\ref{eq:FD}) and (\ref{eq:FDsquared}) of $\mu^i(X_t)$ and $\Sigma^{i,j}(X_t)$, respectively. Higher order approximations of these data points can in turn lead more accurate approximations of the functions in the output of SINDy. We can generate better approximations for the drift using multistep difference method. The use of linear multistep methods (LMMs) to estimate dynamics is investigated in \cite{keller2021discovery} for deterministic systems. While the estimates for the diffusion will be similar, they can not be achieved strictly using LMMs.
	
	In order to achieve a higher order approximation, we will need to use more data points in the approximation at each time step. As such, we will define
	\begin{equation} 
	\label{eq:MatrixDef2}
	\Theta_n=\begin{bmatrix}
	\theta(X_{t_n}) \\ \theta(X_{t_{n+1}}) \\ \vdots \\\theta(X_{t_{N+n-1}})
	\end{bmatrix}~~~~~\text{and}~~~~~D^i_n=\begin{bmatrix}
	X^i_{t_{n}}-X^i_{t_0} \\ X^i_{t_{n+1}}-X^i_{t_{1}} \\ \vdots \\ X^i_{t_{N+n-1}}-X^i_{t_{N-1}}
	\end{bmatrix}.
	\end{equation}
	With this definition, $\Theta_n$ contains the data of $\theta$ time delayed by $n$ steps. With the earlier definition of $\Theta,$ we have $\Theta=\Theta_0$. Similarly, $D^i_n$ contains the data for the change in $X$ over $n$ time steps, with $D^i_1=D^i$ using the earlier definition of $D^i$.
	
\subsection{Drift}
\label{sec:Drift}
First, we will look to make improvements on estimating the drift. These estimates will be simpler than those for the diffusion. As mentioned, these approximations are directly analogous to the linear multistep methods used in the simulation of deterministic systems.
\subsubsection{Second Order Forward difference}
The first order forward difference, which is used to approximate $\mu^i$ in Theorem \ref{th:DriftOrd1}, is also commonly used to approximate the derivative $f(x)$ in the differential equation $\dot{x}=f(x)$. In fact, if we compare the weak Ito-Taylor expansion (\ref{eq:WeakExp}) with the deterministic Taylor series for an ODE, (\ref{eq:Taylor}), we see that they are almost identical. There are many higher order methods which are used to approximate $f$ in the simulation of ODEs. By analogy, can expect that these methods would give an approximation  of the same order for $\mu^i$ (in expectation). One of the simplest of these would be the second order forward difference,
\begin{equation}
\label{eq:FDOrd2}
\mu^i(X_{t_n})\approx\frac{4(X_{t_{n+1}}-X_t)-(X_{t_{n+2}}-X_t)}{2\Delta t}=\frac{-3X^i_{t_n}+4X^i_{t_{n+1}}-X^i_{t_{n+2}}}{2\Delta t}.
\end{equation}
Similar to before we can define the error in this approximation to be
\[e_t=\frac{-3X^i_t+4X^i_{t+\Delta t}-X^i_{t+2\Delta t}}{2\Delta t}-\mu^i(X_t).\]
Using the weak Ito-Taylor expansion (\ref{eq:WeakExp}), it is easy to see that
\begin{equation}
\label{eq:DriftErrOrd2}
\E(e_{t_n}\,|\,X_{t_n})=-\frac{(L^0)^2\mu^i(X_{t_n})}{3}\Delta t^2+O(\Delta t^3),
\end{equation}
which shows that this method does indeed give a second order approximation of $\mu$. Using this approximation, we can set up a matrix formulation of (\ref{eq:FDOrd2}):
\[\Theta_0\alpha^i\approx\frac{1}{2\Delta t}\left(4D^i_1-D^i_2\right),\]
If we set up the normal equations, this becomes 
\begin{equation}
\label{eq:DriftOrd2Normal}
\Theta_0^*\Theta_0\tilde{\alpha}^i=\frac{1}{2\Delta t}\Theta_0^*\left(4D^i_1-D^i_2\right).
\end{equation}
\begin{theorem}
	\label{th:DriftOrd2}
	Consider the approximation $\tilde{\alpha}^i$ obtained from (\ref{eq:DriftOrd2Normal}). The mean error is bounded by
	\[\left\|\E(err)\right\|_2= \frac{C_1}{3}(\|(L^0)^2\mu^i\|+O(\Delta t)+o(1))\Delta t^2\]
	and the mean squared error by
	\[\E\left(\left\|(err)\right\|_2^2\right)=\frac{C_2}{T}\left(\sum_{j}^d\|\sigma^{i,j}\|_4^2+O(\Delta t^{\frac{1}{2}})+o(1)\right).\]
	The constants $C_1$ and $C_2$ are the same as those given in (\ref{eq:Constants}).
\end{theorem}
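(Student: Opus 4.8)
The plan is to run the argument of Theorem~\ref{th:DriftOrd1} with the single-step error replaced by the three-point error $e_t=\frac{-3X^i_t+4X^i_{t+\Delta t}-X^i_{t+2\Delta t}}{2\Delta t}-\mu^i(X_t)$ and the matrices $\Theta,D^i$ replaced by $\Theta_0$ and $4D^i_1-D^i_2$. Collecting the samples of $e_t$ into $E=\big[e_{t_0},\dots,e_{t_{N-1}}\big]^T=\tfrac{1}{2\Delta t}(4D^i_1-D^i_2)-\Theta_0\alpha^i$ (using $\mu^i=\theta\alpha^i$), the normal equation (\ref{eq:DriftOrd2Normal}) gives $err=\tilde\alpha^i-\alpha^i=\big(\tfrac1N\Theta_0^*\Theta_0\big)^{-1}\big(\tfrac1N\Theta_0^*E\big)$, and ergodicity gives $\big(\tfrac1N\Theta_0^*\Theta_0\big)^{-1}=\langle\theta,\theta\rangle^{-1}+o(1)$. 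Exactly as in Theorem~\ref{th:DriftOrd1}, the whole problem then reduces to bounding the mean and the second moment of $\tfrac1N\Theta_0^*E=\tfrac1N\sum_n\theta^*(X_{t_n})e_{t_n}$, which in turn reduces to bounding $\E(e_t\mid X_t)$ and $\E(|e_t|^2\mid X_t)$ and averaging them against $\theta^*\theta$ along the trajectory.

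For the bias I would first establish (\ref{eq:DriftErrOrd2}). Applying the weak Ito--Taylor expansion (\ref{eq:WeakExp}) to $f(x)=x^i$ at steps $\Delta t$ and $2\Delta t$ and forming $-3X^i_t+4\,\E(X^i_{t+\Delta t}\mid X_t)-\E(X^i_{t+2\Delta t}\mid X_t)$: the constant terms cancel ($-3+4-1=0$), the $\Delta t$-term equals $2\mu^i(X_t)\Delta t$ and, after division by $2\Delta t$, cancels the $-\mu^i(X_t)$ in $e_t$, the $\Delta t^2$-term vanishes since $4\cdot\tfrac12-2=0$, and the surviving $\Delta t^3$-term is $\big(4\cdot\tfrac16-\tfrac43\big)(L^0)^2\mu^i(X_t)\Delta t^3=-\tfrac23(L^0)^2\mu^i(X_t)\Delta t^3$, so $\E(e_t\mid X_t)=-\tfrac{(L^0)^2\mu^i(X_t)}{3}\Delta t^2+O(\Delta t^3)$. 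Plugging this into $\E\big(\tfrac1N\Theta_0^*E\big)=\E\big(\tfrac1N\sum_n\theta^*(X_{t_n})\E(e_{t_n}\mid X_{t_n})\big)$ and using ergodicity yields $\E\big(\tfrac1N\Theta_0^*E\big)=-\tfrac{\Delta t^2}{3}\big(\langle(L^0)^2\mu^i,\theta\rangle+o(1)\big)+O(\Delta t^3)$; multiplying by $\langle\theta,\theta\rangle^{-1}+o(1)$, taking norms, and bounding $\big\|\langle(L^0)^2\mu^i,\theta\rangle\big\|_2\le\|\theta\|_2\|(L^0)^2\mu^i\|_2$ gives the stated bias with constant $C_1/3$.

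The one genuinely new computation is $\E(|e_t|^2\mid X_t)$, for which I would use the strong expansion (\ref{eq:StrongDrift}). Writing $X^i_{t+\Delta t}-X^i_t=\mu^i(X_t)\Delta t+\sum_m\sigma^{i,m}(X_t)\Delta W^m_t+R_A$ and, applying (\ref{eq:StrongDrift}) over the double step, $X^i_{t+2\Delta t}-X^i_t=2\mu^i(X_t)\Delta t+\sum_m\sigma^{i,m}(X_t)\big(\Delta W^m_t+\Delta W^m_{t+\Delta t}\big)+R_B$, with $\E(|R_A|^2\mid X_t),\E(|R_B|^2\mid X_t)=O(\Delta t^2)$, the $\mu^i$ terms collapse and one is left with $e_t=\tfrac{1}{2\Delta t}\sum_m\sigma^{i,m}(X_t)\big(3\Delta W^m_t-\Delta W^m_{t+\Delta t}\big)+\tfrac{4R_A-R_B}{2\Delta t}$. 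Because the two increments are independent of each other and of $X_t$ and distinct Brownian components are independent, the conditional second moment of the leading term is a fixed multiple of $\tfrac1{\Delta t}\sum_m(\sigma^{i,m}(X_t))^2$, while the remainder term and its cross term with the leading term are $O(\Delta t^{-1/2})$ in conditional mean square (Cauchy--Schwarz with $\E(|R|^2\mid X_t)=O(\Delta t^2)$); hence $\E(|e_t|^2\mid X_t)$ has the same $O(\Delta t^{-1})$ form, governed by $\sum_m(\sigma^{i,m})^2$, as in (\ref{eq:DriftErrSq}). Feeding this into $\E(\|err\|_2^2)\le\big(\|\langle\theta,\theta\rangle^{-1}\|_2^2+o(1)\big)\,\E\big(\|\tfrac1N\Theta_0^*E\|_2^2\big)$, using Cauchy--Schwarz to bound $\big\|\tfrac1N\sum_n\theta^*(X_{t_n})e_{t_n}\big\|_2^2\le\tfrac1N\sum_n\|\theta(X_{t_n})\|_2^2|e_{t_n}|^2$, taking conditional expectations, and invoking ergodicity turns $\tfrac1{N^2}\sum_n\E\big(\|\theta\|_2^2\,\E(|e_t|^2\mid X_t)\big)$ into $\tfrac1{N\Delta t}\big(\sum_m\langle(\sigma^{i,m})^2,\|\theta\|_2^2\rangle+O(\Delta t^{1/2})+o(1)\big)$; since $N\Delta t=T$ and $\langle(\sigma^{i,m})^2,\|\theta\|_2^2\rangle\le\|\theta\|_4^2\|\sigma^{i,m}\|_4^2$, this is $\tfrac{C_2}{T}\big(\sum_m\|\sigma^{i,m}\|_4^2+O(\Delta t^{1/2})+o(1)\big)$, and the $O(\Delta t^4)$ contribution of $\|\E(err)\|_2^2$ to the mean squared error is of lower order.

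The main obstacle is the mean-square bookkeeping in the strong expansion of the three-point stencil: one must split the Brownian increment over $[t,t+2\Delta t]$ into its two sub-step pieces, track which remainder terms are $O(\Delta t^2)$ rather than $O(\Delta t^3)$ in conditional mean square over a step of length $2\Delta t$, and verify that every cross term between the surviving martingale increment and the remainders is genuinely lower order. As in Theorems~\ref{th:Convergence}--\ref{th:DiffOrd1}, one also needs the integrability hypotheses of Remark~\ref{re:Assumptions} to replace the time averages by the stated $L^2(\rho)$ inner products and to interchange the $\Delta t\to0$ and $T\to\infty$ limits.
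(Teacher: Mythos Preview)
Your bias argument is fine and matches the paper's. The gap is in the variance bound. You invoke
\[
\Big\|\tfrac1N\sum_n\theta^*(X_{t_n})e_{t_n}\Big\|_2^2\le\tfrac1N\sum_n\|\theta(X_{t_n})\|_2^2\,|e_{t_n}|^2,
\]
which is Jensen's inequality and carries a factor $1/N$, but in the next line you silently replace this by $\tfrac1{N^2}\sum_n$, which is what one gets from \emph{orthogonality} of the summands, $\E\|\sum_n v_n\|^2=\sum_n\E\|v_n\|^2$. The Jensen bound alone yields a variance of order $1/\Delta t$ with no decay in $T$, so it is useless here. In Theorem~\ref{th:DriftOrd1} the orthogonality step is legitimate because the leading part of $\theta^*(X_{t_n})e_{t_n}$ is $\theta^*(X_{t_n})\sigma^{i,m}(X_{t_n})\Delta W^m_{t_n}/\Delta t$, a genuine martingale difference. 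For the three-point stencil this fails: the leading part of $e_{t_n}$ is $\tfrac{1}{2\Delta t}\sum_m\sigma^{i,m}(X_{t_n})(3\Delta W^m_{t_n}-\Delta W^m_{t_{n+1}})$, so $e_{t_n}$ and $e_{t_{n+1}}$ share the increment $\Delta W^m_{t_{n+1}}$ and are correlated at leading order. Even if you track the (negative) nearest-neighbour cross terms by hand, the pointwise value $\E(|e_t|^2\mid X_t)=\tfrac{5}{2\Delta t}\sum_m(\sigma^{i,m})^2+\cdots$ would at best deliver the constant $\tfrac52 C_2$, not the stated $C_2$.

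The paper's appendix handles this by working with $\tfrac1N\Theta_0^*E$ directly rather than with $\E(|e_t|^2\mid X_t)$. Writing out the sum and shifting the index on the $\Delta W^m_{t_{n+1}}$ piece converts
\[
\tfrac{1}{2T}\sum_n\theta^*(X_{t_n})\sigma^{i,m}(X_{t_n})\big(3\Delta W^m_{t_n}-\Delta W^m_{t_{n+1}}\big)
\]
into $\tfrac{1}{2T}\sum_n\big(3\theta^*(X_{t_n})\sigma^{i,m}(X_{t_n})-\theta^*(X_{t_{n-1}})\sigma^{i,m}(X_{t_{n-1}})\big)\Delta W^m_{t_n}$ plus a boundary term; an Ito--Taylor expansion of the lagged coefficient then collapses this to $\tfrac{1}{T}\sum_n\theta^*(X_{t_n})\sigma^{i,m}(X_{t_n})\Delta W^m_{t_n}$ plus a remainder that is lower order in mean square. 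Now each summand is an $\mathcal F_{t_n}$-adapted coefficient times $\Delta W^m_{t_n}$, martingale orthogonality applies, and one recovers exactly the first-order variance with the same constant $C_2$. The ``mean-square bookkeeping'' you flag as the main obstacle is really this summation-by-parts step; without it the variance argument does not close.
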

The proof of Theorem \ref{th:DriftOrd2} is similar to that of Theorem \ref{th:DriftOrd1}, but requires some extra algebraic manipulation, so it is included in appendix \ref{App:ErrorDrift1}.

\begin{remark}
	\label{Re:ItoVsStrat}
	These methods can easily be generalized to higher order methods using higher order finite differences, as will be done in section \ref{sec:GenDrift}. However, the least squares solution only yields correct results for forward differences. Other finite difference methods can cause certain sums to converge to the wrong stochastic integral. For example, a central difference approximation for $\mu^i$,
	\[\mu^i_{t}\approx\frac{X^i_{t+\Delta t}-X^i_{t-\Delta t}}{2\Delta t},\]
	gives us $\Theta_1 \alpha^i\approx \frac{1}{2\Delta t}D^i_2$. The normal equations for the least squares solution
	\begin{equation}
	\label{eq:CenDiff}
	\Theta_1^*\Theta_1 \tilde{\alpha}^i=\frac{1}{2\Delta t}\Theta_1^*D^i_2
	\end{equation}
	gives the wrong results, because as $\Delta t\to 0$, $\frac{1}{2}\Theta_1^*D^i_2$ converges to the Stratonovich integral instead of the Ito integral,
	\[\frac{1}{2}\Theta_1^*D^i_2\to \int_0^T\theta^*(X_t)\circ dX^i_t\neq \int_0^T \theta^*(X_t)\,dX^i_t,\]
	and $\tilde{\alpha}^i$ will not converge to the correct value. To prevent this, (\ref{eq:CenDiff}) can instead be solved using
	\[\Theta_0^*\Theta_1\tilde{\alpha}^i=\frac{1}{2\Delta t}\Theta_0^*D^i_2,\]
	which gives the proper convergence.	This amount to using $\Theta_0$ as a set of instrumental variables (see \cite{reiersol1945confluence}).
	
\end{remark}

\subsubsection{Trapezoidal Method}
The second order method above uses additional measurements of $X^i_t$ to provide a more accurate estimate of $\mu^i$. Alternatively, we can use multiple measurements of $\mu^i$ to better approximate the difference $X^i_{t+\Delta t}-X^i_t$. Consider the first order forward difference given by (\ref{eq:FD}).
\[\mu^i(X_{t_n})\approx\frac{X^i_{t_{n+1}}-X^i_{t_n}}{\Delta t}.\]
Theorem \ref{th:DriftOrd1} used this difference to give an order $\Delta t$ approximation of $\mu^i$. However, it turns out that $\frac{1}{2}(\mu^i(X_t)+\mu^i(X_{t+\Delta t}))$ gives a much better approximation of this difference:
\begin{equation}
\label{eq:TrapDifference}
\frac{1}{2}\left(\mu^i(X_{t_n})+\mu^i(X_{t_{n+1}})\right)\approx\frac{X^i_{t_{n+1}}-X^i_{t_n}}{\Delta t}.
\end{equation}
We will call this approximation the trapezoidal approximation, since this is exactly the trapezoidal method used in the numerical simulation of ODEs. If we consider the error in this equation,
\[e_t=\frac{X^i_{t_{n+1}}-X^i_{t_n}}{\Delta t}-\frac{1}{2}\left(\mu^i(X_{t_n})+\mu^i(X_{t_{n+1}})\right),\]
we can use the weak Ito-Taylor approximations of $X_t$ and $\mu^i(X_t)$  to show that
\begin{equation}
\label{eq:TrapErr}
\E(e_t\,|\,X_t)=-(L^0)^2\mu^i(X_t)\frac{\Delta t^2}{12}+O(\Delta t^3).
\end{equation}
This not only gives us a second order method, with respect to $\Delta t$, but the leading coefficient for the error is much smaller (by a factor of $1/8$) than the second order forward difference.

To set up the matrix formulation of (\ref{eq:TrapDifference}), we have
\begin{equation}
\label{eq:TrapMatrix}
\frac{1}{2}\left(\Theta_0+\Theta_1\right)\alpha^i\approx\frac{1}{\Delta t}D^i_1.
\end{equation}
We can multiply (\ref{eq:TrapMatrix}) by $\Theta_0^*$ on each side to obtain
\begin{equation}
\label{eq:TrapNormal}
\frac{1}{2}\Theta_0^*(\Theta_0+\Theta_1)\tilde{\alpha}^i=\frac{1}{\Delta t}\Theta_0^*D^i_1.
\end{equation}
We can use this equation analogously to the normal equation; we will solve for $\tilde{\alpha}^i$ either directly using matrix inversion or by using a sparse solver.
\begin{remark}
	\label{re:TrapSolve}
	We note that we cannot solve (\ref{eq:TrapMatrix}) using least squares,
	\[\tilde{\alpha}^i\neq \frac{2}{\Delta t}(\Theta_0+\Theta_1)^+D^i_1.\]
	Similar to Remark \ref{Re:ItoVsStrat}, this leads to sums converging to the wrong stochastic integral. In \cite{frishman2020learning}, a similar method was used which leverages the convergence to the Stratonovich integral to generate an approximation which better handles noise. Their method corrects for the Ito vs. Stratonovich differently than the one presented here, and requires an accurate estimate of the divergence of the diffusion function.
\end{remark}

\begin{theorem}
	\label{th:DriftTrap}
	Consider the estimation $\tilde{\alpha}^i$ given by solving (\ref{eq:TrapNormal}). The mean error is bounded by
	\[err_{mean}\leq C_1\frac{\Delta t^2}{12}(\|(L^0)^2\mu^i\|_2+O(\Delta t)+o(1))\]
	and
	\[err_{var}\leq \frac{C_2}{T}\left(\sum_{j=1}^d\|\sigma^{i,j}\|_2^2+O(\Delta t^{\frac{1}{2}})+o(1)\right).\]
\end{theorem}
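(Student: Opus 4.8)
The plan is to follow the template of Theorem~\ref{th:DriftOrd1}, adapting it to the fact that the ``design matrix'' on the left of the normal equation is now $\tfrac12\Theta_0^*(\Theta_0+\Theta_1)$ rather than $\Theta_0^*\Theta_0$. First I would write the error vector. Setting $E=\frac{1}{\Delta t}D^i_1-\tfrac12(\Theta_0+\Theta_1)\alpha^i$, whose $n$-th entry is exactly the pointwise error $e_{t_n}$ from \eqref{eq:TrapErr}, the normal equation \eqref{eq:TrapNormal} gives
\[
err=\tilde\alpha^i-\alpha^i=\left(\tfrac12\Theta_0^*(\Theta_0+\Theta_1)\right)^{-1}\Theta_0^*E.
\]
The key ergodic observation is that $\tfrac1N\Theta_0^*\Theta_1=\tfrac1N\sum_n\theta^*(X_{t_n})\theta(X_{t_{n+1}})$ converges, as $\Delta t\to0$ and $T\to\infty$, to $\langle\theta,\theta\rangle$ as well: the time-delayed data $\theta(X_{t_{n+1}})$ differs from $\theta(X_{t_n})$ by $O(\sqrt{\Delta t})$ in the relevant averaged sense, so $\tfrac1N\Theta_0^*(\Theta_0+\Theta_1)=2\langle\theta,\theta\rangle+o(1)$ and hence $\left(\tfrac1N\cdot\tfrac12\Theta_0^*(\Theta_0+\Theta_1)\right)^{-1}=\langle\theta,\theta\rangle^{-1}+o(1)$, exactly as in \eqref{eq:ThetaIP}. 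This reduces both bounds, just as before, to controlling the mean and variance of $\tfrac1N\Theta_0^*E$.

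For the bias, I would condition on $X_{t_n}$ and use \eqref{eq:TrapErr}: $\E(e_{t_n}\mid X_{t_n})=-\tfrac{\Delta t^2}{12}(L^0)^2\mu^i(X_{t_n})+O(\Delta t^3)$. Then
\[
\E\!\left(\tfrac1N\Theta_0^*E\right)=\tfrac1N\sum_{n}\E\!\left(\theta^*(X_{t_n})\,\E(e_{t_n}\mid X_{t_n})\right)=-\tfrac{\Delta t^2}{12}\left(\langle(L^0)^2\mu^i,\theta\rangle+o(1)\right)+O(\Delta t^3),
\]
and multiplying by $\langle\theta,\theta\rangle^{-1}+o(1)$ and taking norms yields $err_{mean}\le C_1\tfrac{\Delta t^2}{12}(\|(L^0)^2\mu^i\|_2+O(\Delta t)+o(1))$ with $C_1=\|\langle\theta,\theta\rangle^{-1}\|_2\|\theta\|_2$. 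For the variance, I need $\E(|e_{t_n}|^2\mid X_{t_n})$. Writing $e_{t_n}=\frac{\Delta X^i_{t_n}}{\Delta t}-\tfrac12(\mu^i(X_{t_n})+\mu^i(X_{t_{n+1}}))$ and expanding via \eqref{eq:StrongDrift} (together with a one-step strong expansion of $\mu^i(X_{t_{n+1}})$), the leading stochastic term is $\sum_m\sigma^{i,m}(X_{t_n})\Delta W^m_{t_n}/\Delta t$ — the trapezoidal correction $\tfrac12(\mu^i(X_{t_{n+1}})-\mu^i(X_{t_n}))$ is $O(\sqrt{\Delta t})$ in $L^2$ and does not affect the leading order — so $\E(|e_{t_n}|^2\mid X_{t_n})=\sum_m\sigma^{i,m}(X_{t_n})^2/\Delta t+O(\Delta t^{-1/2})$, exactly as in \eqref{eq:DriftErrSq}. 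The same Cauchy--Schwarz/ergodic estimate as in Theorem~\ref{th:DriftOrd1} then gives $err_{var}\le\frac{C_2}{T}\big(\sum_j\|\sigma^{i,j}\|_2^2+O(\Delta t^{1/2})+o(1)\big)$; the stated bound uses $\|\theta\|_2$ (via a slightly coarser $\|\cdot\|_\infty$-type estimate on $\|\theta\|^2$) in place of the $\|\theta\|_4$ that a literal repetition would produce, which is a harmless strengthening of the constant.

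The main obstacle is the left-hand factor: unlike $\Theta_0^*\Theta_0$, the matrix $\tfrac12\Theta_0^*(\Theta_0+\Theta_1)$ is not symmetric and not obviously invertible for finite data, and its inverse must be shown to converge to $\langle\theta,\theta\rangle^{-1}$. The crux is therefore the claim $\tfrac1N\Theta_0^*\Theta_1\to\langle\theta,\theta\rangle$, which needs a quantitative bound on $\E\big|\tfrac1N\sum_n\theta^*(X_{t_n})(\theta(X_{t_{n+1}})-\theta(X_{t_n}))\big|$; this follows by applying the weak Ito--Taylor expansion \eqref{eq:WeakExp} to each component $\theta_k$ (so that $\E(\theta_k(X_{t_{n+1}})-\theta_k(X_{t_n})\mid X_{t_n})=O(\Delta t)$) and the square-integrability assumptions of Remark~\ref{re:Assumptions}, giving an $O(\Delta t)+o(1)$ discrepancy. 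Once this is in hand, the invertibility of $\tfrac1N\Theta_0^*(\Theta_0+\Theta_1)$ for small $\Delta t$ and large $T$ is automatic by perturbation of the invertible limit $2\langle\theta,\theta\rangle$, and the remainder of the argument is a routine transcription of the proof of Theorem~\ref{th:DriftOrd1}. I would relegate the detailed algebra, as the authors do for Theorem~\ref{th:DriftOrd2}, to the appendix.
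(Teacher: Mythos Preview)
Your proposal is correct and follows essentially the same route as the paper's proof: write $err=\bigl(\tfrac12\Theta_0^*(\Theta_0+\Theta_1)\bigr)^{-1}\Theta_0^*E$, use $\E(\theta(X_{t+\Delta t})\mid X_t)=\theta(X_t)+O(\Delta t)$ together with ergodicity to identify the prefactor as $\langle\theta,\theta\rangle^{-1}+O(\Delta t)+o(1)$, and then invoke \eqref{eq:TrapErr} for the bias and the strong-expansion bound $\E(|e_t|^2\mid X_t)\le\Delta t^{-1}\sum_m|\sigma^{i,m}|^2+O(\Delta t^{-1/2})$ for the variance. Your extra care about invertibility of the non-symmetric left factor and your remark on the $\|\cdot\|_2$ versus $\|\cdot\|_4$ constant are more explicit than what the paper records, but the argument is the same.
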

\begin{proof}
	Letting $E$ be the matrix containing the samples of $e_t$. We have
	\[\frac{1}{\Delta t}D^i_1=\frac{1}{2}(\Theta_0+\Theta_1)\alpha^i+E.\]
	Using this in (\ref{eq:TrapNormal}) gives us
	\[\frac{1}{2}\Theta_0^*(\Theta_0+\Theta_1)\tilde{\alpha^i}=\frac{1}{2}\Theta_0^*(\Theta_0+\Theta_1)\alpha^i+\Theta_0^*E,\]
	so the error is
	\[err=\tilde{\alpha}^i-\alpha^i=\left(\frac{1}{2}\Theta_0^*(\Theta_0+\Theta_1)\right)^{-1}\Theta_0^*E.\]
	Since $\E(\theta(X_{t+\Delta t})|X_t)=\theta(X_t)+O(\Delta t)$, we can use ergodicity to evaluate
	\[\frac{1}{2N}\Theta_0^*(\Theta_0+\Theta_1)\to\langle \theta,\theta\rangle+O(\Delta t)+o(1).\]
	The proof of first inequality then follows the proof of Theorem \ref{th:DriftOrd1} and (\ref{eq:TrapErr}). The second inequality also follows using 
	\[\E\left(\|e_t\|_2^2~|~X_t=x\right)\leq \frac{1}{\Delta t}\sum_{m=1}^d |\sigma^{i,m}(x)|^2+O(\Delta t^{\frac{-1}{2}}),\]
	which can easily be derived using the Ito-Taylor expansions.	
\end{proof}

\subsubsection{General Method for Estimating Drift}
\label{sec:GenDrift}
	We have given methods which give second order estimates of $\alpha^i$. To generate methods which give even higher order approximations, we note the similarities of the above methods to linear multi-step methods used in the numerical simulation of ODEs. Using the general LMM as a guide, we set up a general method for approximating $\mu^i$:
	\begin{equation}
	\label{eq:GenMethod}
	\sum_{l=0}^k a_l\,\mu^i(X_{t_{n+l}})\approx\sum_{l=1}^p b_l\,(X^i_{t_{n+l}}-X^i_{t_{n}}),
	\end{equation}
	or
	\[\left(\sum_{l=0}^k a_l\Theta_l\right)\alpha^i\approx \sum_{l=1}^p b_l D^i_l.\]
	Keeping Remark \ref{Re:ItoVsStrat} in mind, we can solve this using
	\begin{equation}
	\label{eq:GenNormal}
	\left(\sum_{l=0}^{k}a_l\Theta_0^*\Theta_l\right)\tilde{\alpha^i}=b_l\sum_{l=1}^p\Theta_0^*D^i_l.
	\end{equation}
	The coefficients in (\ref{eq:GenMethod}) can be chosen to develop higher order methods. However, due to the stochastic nature of the problem, large amounts of data may be required to achieve the order in practice. We will need enough data to average over the randomness in the SDE, and the higher order methods can be sensitive to noise. More detailed investigation into the convergence of certain classes of methods for dynamics discovery can be found in \cite{keller2021discovery} for deterministic systems.

\subsection{Diffusion}
\label{sec:DiffOrd2}
In this section we will discuss improvements to the estimate for the diffusion. For some systems, particularly when the drift is large relative the diffusion, the first order approximation given above may not be sufficient to obtain an accurate estimate of the diffusion coefficient. Using similar ideas to the previous section we can use the Ito-Taylor expansions to develop more accurate estimates of $\Sigma^{i,j}(X_t)$. However, these methods will be more complex; in addition to samples of $X_t$, some of these methods may also require data from the drift, $\mu^i(X_t)$ and $\mu^j(X_t)$.
\subsubsection{Drift Subtraction}
Before discussing the higher order methods, we can make an improvement upon the first order method. In \cite{ragwitz2001indispensable}, Ragwitz and Kantz noted that by correcting for the effects of the drift in \ref{eq:FDsquared}, we can make significant improvements to the estimate. To derive their estimate, we use the Ito-Taylor expansion for $X_t$, which gives us
\[X^i_{t+\Delta t}-X^i_t=\mu(X_t)\Delta t+\sum_{m=1}^d\sigma(X_t)\Delta W^m_t+R_t,\]
where $\Delta W_t=W_{t+\Delta t}-W_t$ is the increment of a $d$-dimensional Wiener process and $R_t$ is the remainder term. This equation, with the remainder term excluded, actually gives the Euler-Marayama method for simulating SDEs. In essence, the approximation (\ref{eq:FDsquared}) uses 
\[X^i_{t+\Delta t}-X^i_t\approx\sum_{m=1}^d\sigma^{i,m}(X_t)\Delta W^m_t\]
to approximate the increment of the Wiener process. However, (\ref{eq:FDsquared}) tosses out the $\mu(X_t)\Delta t$ term because it is of a higher order. If we include it, we get the more accurate
\begin{equation}
\label{eq:WeinerEst}
\sum_{m=1}^d \sigma^{i,m} \Delta W^m_t=(X^i_{t+\Delta t}-X^i_t)-\mu(X_t)\Delta t-R_t.
\end{equation}
We can use this to generated a better approximation of $\Sigma^{i,j}$, 
\begin{equation}
\label{eq:MuSub}
\Sigma^{i,j}(X_t)\approx \frac{(X^i_{t+\Delta t}-X^i_t-\mu^i(X_t)\Delta t)(X^j_{t+\Delta t}-X^j_t-\mu^j(X_t)\Delta t)}{2\Delta t}.
\end{equation}
(We note that the estimate derived here is in slightly different form than that derived in \cite{ragwitz2001indispensable}, but will have a similar effect.), This approximation will be more accurate than (\ref{eq:FDsquared}), but it will have be of same order with respect to $\Delta t$. Letting $e_t$ be the error in (\ref{eq:MuSub}), we can use the weak Ito-Taylor expansion to show
\[\E(e_t\,|\,X_t)=f(X_t)\frac{\Delta t}{2}+O(\Delta t^2),~~~~~~~~~f=L^0\Sigma^{i,j}+\sum_{m=1}^d \left(\Sigma^{i,m}\frac{\partial \mu^j}{\partial x^m}+\Sigma^{j,m}\frac{\partial \mu^i}{\partial x^m}\right).\]
This gives an improvement over (\ref{eq:DiffErrm}) by removing the $\mu^i\mu^j$ term in $f$ (compared to Theorem \ref{th:DiffOrd1}). We note that this correction does not cancel all of the $O(\Delta t)$ terms in the error and thus does not improve the order of convergence. However, in systems where the drift dominates the diffusion the contributions of $\mu^i\mu^j$ will be large. For these systems, such as the Van-Der-Pol (\ref{eq:VanDerPol}) and Lorenz (\ref{eq:Lorenz}) examples presented in Section \ref{sec:Numerics}, the improvement will be large. In systems where the drift is typically small, such as the system with a double well potential (\ref{eq:DoubleWell}), the improvement will be modest.

In order to implement this method, we will need an approximation of $\mu^i$. However, we can use the methods above to represent the drift as $\mu^i(X_t)\approx\theta(X_t)\tilde{\alpha}^i$. We can use this to set up the matrix equations
\begin{equation}
\label{eq:MuSubNormal}
\Theta_0^*\Theta_0\tilde{\beta}^{i,j}=\frac{1}{\Delta t}(D^i_1-\Theta_0\tilde{\alpha}^i)\odot(D^j_1-\Theta_0\tilde{\alpha}^j),
\end{equation}
and solve for $\tilde{\beta}^{i,j}$.
\begin{remark}
	The equation (\ref{eq:MuSubNormal}) assumes that the same dictionary $\theta$ is used to estimate $\mu^i,\mu^j$ and $\Sigma^{i,j}$. In general, we could used separate dictionaries to estimate each of the parameters, since all we need are the approximations of the samples of $\mu^i(X_t)$ and $\mu^j(X_t)$ to estimate $\beta^{i,j}$.
\end{remark}

\subsection{Second Order Forward Difference}
While subtracting the drift from the differences \newline $X^i_{t+\Delta t}-X^i_t$ gives marked improvements, we can also generate a higher order method using a two step forward difference, similar to the drift. The analysis for the estimation of the diffusion constant using the two step forward difference is essentially identical to that of the drift, so we will go through it briefly. Define the approximation
\begin{equation}
\label{eq:Diff2Step}
\Sigma^{i,j}\approx\frac{4(X^i_{t+\Delta t}-X^i_t)(X^j_{t+\Delta t}-X^j_t)-(X^i_{t+2\Delta t}-X^i_{t})(X^j_{t+2\Delta t}-X^j_{t})}{4\Delta t}.
\end{equation}
As usual, letting $e_t$ be the error in this approximation, we can use the Ito-Taylor expansions (\ref{eq:WeakExp}) and (\ref{eq:StrongDiff}) to show that
\[\E(e_t)=O(\Delta t^2)~~~~~\text{and}~~~~~\E(|e_t|^2)=O(\Delta t).\]
This will gives us a second order method for the diffusion coefficients. We did not include the constants for the order $\Delta t^2$ for brevity, since the number of terms in the expressions can get quite large. We can use the approximation (\ref{eq:Diff2Step}) to set up the matrix equations
\begin{equation}
\label{eq:DiffOrd2Normal}
\Theta_0^*\Theta_0\tilde{\beta}^{i,j}=\frac{1}{4\Delta t}\Theta_0^*\left(4D^i_1\odot D^j_1-D^i_2\odot D^j_2\right),
\end{equation}
which we can solve for $\tilde{\beta}^{i,j}$.

\begin{theorem}
	\label{th:DiffOrd2}
	Consider the estimate $\tilde{\beta}^{i,j}$ given by solving (\ref{eq:DiffOrd2Normal}).
	Then we have
	\[err_{mean}=O(\Delta t^2)+o(1)\]
	and
	\[err_{var}=\frac{1}{T}O(\Delta t)+o(1/T).\]
\end{theorem}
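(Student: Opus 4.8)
The plan is to follow the template of Theorems \ref{th:DriftOrd1}, \ref{th:DriftOrd2} and \ref{th:DiffOrd1}: write the coefficient error as a normalized sum of per-step errors, control the conditional mean and conditional second moment of those errors with the Ito-Taylor expansions, and then average using ergodicity. First I would let $e_{t_n}$ denote the error in the approximation (\ref{eq:Diff2Step}) and set $E=\begin{bmatrix} e_{t_0} & \cdots & e_{t_{N-1}}\end{bmatrix}^T$. Since $\theta(X_{t_n})\beta^{i,j}=\Sigma^{i,j}(X_{t_n})$, the right-hand side of (\ref{eq:DiffOrd2Normal}) equals $\Theta_0^*(\Theta_0\beta^{i,j}+E)$, and because every difference appearing there is a forward difference from $t_n$ weighted by $\theta(X_{t_n})$, the associated discrete sums converge to the correct Ito-type limits, so no instrumental-variable correction of the sort needed in Remarks \ref{Re:ItoVsStrat} and \ref{re:TrapSolve} is required. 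Thus
\[err=\tilde\beta^{i,j}-\beta^{i,j}=\left(\tfrac1N\Theta_0^*\Theta_0\right)^{-1}\left(\tfrac1N\Theta_0^*E\right)=\left(\langle\theta,\theta\rangle^{-1}+o(1)\right)\left(\tfrac1N\Theta_0^*E\right),\]
using ergodicity and (\ref{eq:ThetaIP}) for the Gram matrix, and as in the earlier proofs everything reduces to the mean and variance of $\tfrac1N\Theta_0^*E=\tfrac1N\sum_n\theta^*(X_{t_n})e_{t_n}$.

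Next I would supply the two per-step estimates. For the bias, I apply the weak Ito-Taylor expansion in the form (\ref{eq:WeakDiff}) to $f(x)=(x^i-X^i_t)(x^j-X^j_t)$ over intervals of length $\Delta t$ and $2\Delta t$: this gives $\E(\Delta X^i_t\Delta X^j_t\,|\,X_t)=2\Sigma^{i,j}\Delta t+g\Delta t^2+O(\Delta t^3)$ for the one-step product and $4\Sigma^{i,j}\Delta t+4g\Delta t^2+O(\Delta t^3)$ for the two-step product, so the combination in (\ref{eq:Diff2Step}) annihilates both the $\Sigma^{i,j}\Delta t$ and the $g\Delta t^2$ contributions and leaves $\E(e_t\,|\,X_t)=h(X_t)\Delta t^2+O(\Delta t^3)$ for a fixed coefficient function $h$. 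Ergodicity then yields $\E(\tfrac1N\Theta_0^*E)=\Delta t^2(\langle h,\theta\rangle+o(1))+O(\Delta t^3)$, hence $err_{mean}=O(\Delta t^2)+o(1)$. For the variance, I use the strong expansion (\ref{eq:StrongDiff}) at both step lengths to obtain $\E(|e_t|^2\,|\,X_t)=\Psi(X_t)+O(\Delta t^{1/2})$ with $\Psi\ge 0$ a fixed $L^1(\rho)$ function; square integrability of the remainder terms over $\rho$ is guaranteed by Remark \ref{re:Assumptions}.

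The hard part is the variance bound, which genuinely departs from Theorem \ref{th:DiffOrd1} because $e_{t_n}$ now depends on $X$ over the two steps $[t_n,t_{n+2}]$: the centered errors $m_{t_n}=e_{t_n}-\E(e_{t_n}\,|\,X_{t_n})$ still satisfy $\E(m_{t_n}\,|\,X_{t_n})=0$, but they are only conditionally uncorrelated when the indices differ by at least two. I would therefore split $e_{t_n}=m_{t_n}+\E(e_{t_n}\,|\,X_{t_n})$; in $\E\|\tfrac1N\sum_n\theta^*(X_{t_n})m_{t_n}\|_2^2$ the tower property kills every cross term with $|n-m|\ge 2$, and Cauchy--Schwarz bounds the nearest-neighbour cross terms by the diagonal, so using $N=T/\Delta t$ and $\E(|m_{t_n}|^2\,|\,X_{t_n})\le\E(|e_{t_n}|^2\,|\,X_{t_n})$,
\[Var\!\left(\tfrac1N\sum_n\theta^*(X_{t_n})m_{t_n}\right)\le\tfrac{C}{N}\left(\langle\Psi,\|\theta\|_2^2\rangle+O(\Delta t^{1/2})+o(1)\right)=\tfrac{\Delta t}{T}\,O(1)=\tfrac1T\,O(\Delta t).\]
The remaining piece $\tfrac1N\sum_n\theta^*(X_{t_n})\E(e_{t_n}\,|\,X_{t_n})$ is $\Delta t^2$ times an ergodic average; it reproduces $\E(err)$ (already bounded) and has variance $o(1/N)$, so after subtracting the mean it is absorbed into the $o(1/T)$ term, as is the lower-order error from replacing $(\tfrac1N\Theta_0^*\Theta_0)^{-1}$ by $\langle\theta,\theta\rangle^{-1}$. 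Combining gives $err_{var}=\tfrac1T O(\Delta t)+o(1/T)$. A secondary, purely computational nuisance is verifying the two leading-order cancellations from the Ito-Taylor coefficients; since the statement records only the orders, I would keep this at the level of order bookkeeping rather than writing the (numerous) constants out.
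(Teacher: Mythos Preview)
Your proof is correct and follows the same template the paper uses throughout; the paper itself gives no detailed proof here, only the remark that it ``is similar to the previous proofs.'' You are in fact more careful than the paper on two points. First, you correctly record $\E(|e_t|^2\,|\,X_t)=\Psi(X_t)+O(\Delta t^{1/2})=O(1)$, whereas the paper states $\E(|e_t|^2)=O(\Delta t)$ just before (\ref{eq:DiffOrd2Normal}); that claim is inconsistent with (\ref{eq:DiffErrsq}) for the one–step case and does not hold for the two–step combination either, but the stated order $err_{var}=\tfrac{1}{T}O(\Delta t)$ is nonetheless right because the extra $\Delta t$ comes from the $1/N=\Delta t/T$ averaging, not from $\E(|e_t|^2)$. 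Second, you explicitly flag and handle the nearest–neighbour correlation created by the two–step stencil.

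On that last point there is a minor methodological difference worth noting. In the appendix proof of Theorem \ref{th:DriftOrd2} the paper deals with the analogous overlap by shifting indices in the sum so that each summand depends on $\Delta W_{t_n}$ only through $\mathcal{F}_{t_n}$–measurable coefficients, recovering a martingale–difference structure up to two boundary terms absorbed into the remainder. Your route instead bounds the single off–diagonal band of cross terms directly via the tower property and Cauchy--Schwarz. Both arguments yield the same $\tfrac{C}{N}$ bound; the paper's reindexing is algebraically a bit cleaner for this particular stencil, while your approach requires less manipulation of the Ito--Taylor terms and extends more transparently to longer multistep schemes.
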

The proof of Theorem \ref{th:DiffOrd2} is similar to the previous proofs. Additionally, we only give the leading order of the error, so deriving the bounds for $\E(e_t|X_t)$ and $\E(|e_t|^2|X_t)$ is simpler than the previous methods.

\subsubsection{Trapezoidal Method}
Extending the trapezoidal approximation to estimating the diffusion coefficient is slightly trickier. Let $\Delta X^i_t=X^i_{t+\Delta t}-X^i_{t}$. If we attempt use the analogue to (\ref{eq:TrapDifference}), we get
\[\Sigma^{i,j}(X_{t_{n+1}})+\Sigma^{i,j}(X_t)=\frac{\Delta X^i_{t_n}\Delta X^j_{t_n}}{\Delta t}+R_{t_n},\]
with
\[\E(R_{t_n})=f(X_{t_n})\Delta t+O(\Delta t^2),~~~~~~f=\mu^i\mu^j+\sum_{k=1}^d\left(\Sigma^{i,k}\frac{\partial \mu^j}{\partial x^k}+\Sigma^{j,k}\frac{\partial \mu^i}{\partial x^k}\right),\]
which is still only an order $\Delta t$ method. However, we already demonstrated in (\ref{eq:WeinerEst}) that correct the difference $\Delta X^i_t$ for the drift can improve our approximation of $\sum_{m=1}^d \sigma^{i,m}\Delta W^m_t$. We will use the same trick here, except we will improve upon (\ref{eq:WeinerEst}) by using the average values of $\mu^i$ and $\mu^j$ instead of the value at the left endpoint:
\[\sum_{m=1}^d \sigma^{i,m}\Delta W^m_t\approx (X_{t+\Delta t}-X_t)-\frac{\Delta t}{2}(\mu(X_t)+\mu(X_{t+\Delta t})).\]
If we use these differences to generate the trapezoidal method, we get 
\begin{equation}
\label{eq:TrapSig}
\Sigma^{i,j}(X_{t+\Delta t})+\Sigma^{i,j}(X_t)\approx\frac{\left(\Delta X^i_t-\frac{\Delta t}{2}(\mu^i(X_{t})+\mu^i(X_{t+\Delta t}))\right)\left(\Delta X^j_t-\frac{\Delta t}{2}(\mu^j(X_t)+\mu^j(X_{t+\Delta t}))\right)}{\Delta t}.
\end{equation}
If we consider the error in (\ref{eq:TrapSig}), using the appropriate Ito-Taylor expansions we can show (see appendix \ref{sec:AppDiffTrap})
\[|\E(e_t\,|\,X_t)|=O(\Delta t^2)~~~~~\text{and}~~~~~\E(|e_t|^2)=O(\Delta t).\]
Then, using the usual matrix notation, we can set up the equation
\begin{equation}
\label{eq:TrapSigNormal}
\Theta_0^*(\Theta_0+\Theta_1)\tilde{\beta}^{i,j}=\frac{1}{\Delta t}\left(D^i_1-\frac{\Delta t}{2} (\Theta_0+\Theta_1)\alpha^i\right)\odot\left(D^j_1-\frac{\Delta t}{2}(\Theta_0+\Theta_1)\alpha^j\right).
\end{equation}
We can solve this equation to get an order $\Delta t^2$ approximation of $\beta^{i,j}$.
\begin{theorem}
	\label{th:DiffTrap}
	Consider the estimate $\tilde{\beta}^{i,j}$ given by solving (\ref{eq:TrapSigNormal}).
	Then we have
	\[err_{mean}=O(\Delta t^2)+o(1)\]
	and
	\[err_{var}=\frac{1}{T}O(\Delta t)+o(1/T).\]
\end{theorem}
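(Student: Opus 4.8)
The plan is to follow the same template as the proofs of Theorems \ref{th:DriftOrd1} and \ref{th:DriftTrap}, since the trapezoidal diffusion estimate (\ref{eq:TrapSigNormal}) has exactly the structure of a normal-equation system perturbed by an error matrix. First I would let $E$ be the column vector of samples of the error $e_t$ from (\ref{eq:TrapSig}), so that
\[
\frac{1}{2\Delta t}\left(D^i_1-\tfrac{\Delta t}{2}(\Theta_0+\Theta_1)\alpha^i\right)\odot\left(D^j_1-\tfrac{\Delta t}{2}(\Theta_0+\Theta_1)\alpha^j\right)=\tfrac{1}{2}(\Theta_0+\Theta_1)\beta^{i,j}+E,
\]
using $\Sigma^{i,j}=\theta\beta^{i,j}$ together with the identity $\Sigma^{i,j}(X_{t+\Delta t})-\Sigma^{i,j}(X_t)$ appearing on the left of (\ref{eq:TrapSig}) being represented via $\tfrac12(\Theta_0+\Theta_1)\beta^{i,j}$ after the telescoping is absorbed into $E$ (mirroring how the drift trapezoidal proof handled $\tfrac12(\Theta_0+\Theta_1)\alpha^i$). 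Substituting into (\ref{eq:TrapSigNormal}) gives
\[
err=\tilde\beta^{i,j}-\beta^{i,j}=\left(\tfrac{1}{2}\Theta_0^*(\Theta_0+\Theta_1)\right)^{-1}\Theta_0^*E,
\]
and, exactly as in Theorem \ref{th:DriftTrap}, ergodicity plus $\E(\theta(X_{t+\Delta t})|X_t)=\theta(X_t)+O(\Delta t)$ yields $\tfrac{1}{2N}\Theta_0^*(\Theta_0+\Theta_1)\to\langle\theta,\theta\rangle+O(\Delta t)+o(1)$, so the prefactor is invertible and $O(1)$.

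Next I would bound the mean and variance of $\tfrac1N\Theta_0^*E$. For the mean, condition on $X_{t_n}$ and use the stated weak estimate $|\E(e_t|X_t)|=O(\Delta t^2)$; ergodicity then gives $\|\E(\tfrac1N\Theta_0^*E)\|=O(\Delta t^2)+o(1)$, hence $err_{mean}=O(\Delta t^2)+o(1)$. For the variance, the key point — as in the earlier diffusion theorems — is that the squared error is not $O(1)$ but $O(\Delta t)$: I would invoke the stated bound $\E(|e_t|^2)=O(\Delta t)$, or more precisely $\E(|e_t|^2|X_t)=O(\Delta t)$ derived from (\ref{eq:StrongDiff}), and then run the Cauchy–Schwarz / orthogonality-of-increments argument from Theorem \ref{th:DriftOrd1}: since the conditional cross terms $\E(e_{t_m}e_{t_n}|X_{t_m},X_{t_n})$ for $m\ne n$ contribute only lower order (the martingale-like structure of the Itô increments), $Var(\tfrac1N\Theta_0^*E)\le \tfrac1{N}\cdot\frac1{N}\sum_n\E(\|\theta(X_{t_n})\|^2\,\E(|e_{t_n}|^2|X_{t_n}))=\tfrac1N O(\Delta t)+o(1/N)$. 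Using $N=T/\Delta t$ this is $\tfrac{\Delta t}{T}O(\Delta t)\cdot\Delta t^{-1}$... more carefully, $\tfrac1N O(\Delta t)=\tfrac{\Delta t}{T}O(\Delta t)$, giving $err_{var}=\tfrac1T O(\Delta t)+o(1/T)$ after multiplying by the $O(1)$ prefactor squared.

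The main obstacle is verifying the two claimed Itô–Taylor estimates for the trapezoidal diffusion error, $|\E(e_t|X_t)|=O(\Delta t^2)$ and $\E(|e_t|^2|X_t)=O(\Delta t)$. This requires expanding the product $\left(\Delta X^i_t-\tfrac{\Delta t}{2}(\mu^i(X_t)+\mu^i(X_{t+\Delta t}))\right)\left(\Delta X^j_t-\tfrac{\Delta t}{2}(\mu^j(X_t)+\mu^j(X_{t+\Delta t}))\right)$ using the strong expansion (\ref{eq:StrongDrift}) for $\Delta X$ and the weak expansion (\ref{eq:WeakExp}) for $\mu(X_{t+\Delta t})$, then matching it against the weak expansion of $\Sigma^{i,j}(X_{t+\Delta t})-\Sigma^{i,j}(X_t)$ and checking that the drift-dependent $\Delta t^2$ terms (the ones that survive in the non-drift-corrected version, cf. the $f$ in the paragraph preceding (\ref{eq:TrapSig})) cancel, leaving an $O(\Delta t^3)$ conditional mean and hence $O(\Delta t^2)$ error after dividing by $2\Delta t$. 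This is a lengthy but routine bookkeeping exercise of the same flavor as the appendix computation behind (\ref{eq:DiffErrsq}); once it is in hand, the rest of the argument is a near-verbatim repetition of Theorems \ref{th:DriftOrd1}, \ref{th:DriftTrap}, and \ref{th:DiffOrd2}, and can be relegated to the appendix. I would also note the analogue of Remark \ref{re:TrapSolve}: (\ref{eq:TrapSigNormal}) must be solved as written (with the $\Theta_0^*$ instrumental-variable multiplier) rather than as a least-squares problem on $\tfrac12(\Theta_0+\Theta_1)$, to ensure the relevant sums converge to the Itô rather than the Stratonovich integral.
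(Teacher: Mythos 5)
Your proposal follows exactly the route the paper intends: the paper itself gives no detailed argument for this theorem beyond "similar to the previous proofs, using the appropriate error bounds," and your reduction to the template of Theorems \ref{th:DriftOrd1}, \ref{th:DriftTrap}, and \ref{th:DiffOrd2} — write $err=\bigl(\tfrac12\Theta_0^*(\Theta_0+\Theta_1)\bigr)^{-1}\Theta_0^*E$, control the prefactor by ergodicity, and feed in the two It\^o--Taylor bounds on $\E(e_t\,|\,X_t)$ and $\E(|e_t|^2\,|\,X_t)$ — is precisely that template. You also correctly identify that the only real work is the expansion verifying that the drift-dependent $\Delta t^2$ terms cancel in the conditional mean, and you rightly read $\tfrac12(\Theta_0+\Theta_1)\beta^{i,j}$ as the quantity being matched (the paper's left-hand side of (\ref{eq:TrapSig}) and the missing $\Theta_0^*$ in (\ref{eq:TrapSigNormal}) are typographical).

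One piece of bookkeeping is inconsistent, though, and you half-notice it yourself with the "more carefully" aside. The per-step conditional second moment for a diffusion estimate of this type is $O(1)$, not $O(\Delta t)$: the leading fluctuation is $\sum_{m,n}\sigma^{i,m}\sigma^{j,n}(\Delta W^m\Delta W^n-\delta_{mn}\Delta t)/(2\Delta t)$, whose conditional variance is $O(\Delta t^2)/\Delta t^2=O(1)$, with the same leading constant $\Sigma^{i,i}\Sigma^{j,j}+(\Sigma^{i,j})^2$ as in (\ref{eq:DiffErrsq}) — the drift correction does not touch this term. The factor of $\Delta t$ in the stated variance bound then comes from $1/N=\Delta t/T$, exactly as in Theorem \ref{th:DiffOrd1}, giving $\tfrac1N O(1)=\tfrac{\Delta t}{T}O(1)=\tfrac1T O(\Delta t)$. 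If instead one genuinely had $\E(|e_t|^2\,|\,X_t)=O(\Delta t)$ per step, your own arithmetic would yield $\tfrac1T O(\Delta t^2)$, not $\tfrac1T O(\Delta t)$; the two claims cannot both be the mechanism. The theorem's conclusion is safe either way, but you should state the per-step bound as $O(1)$ and attribute the extra $\Delta t$ to the $1/N$ factor so that the argument is internally consistent with Theorem \ref{th:DiffOrd1}. With that correction, the proof is complete and matches the paper's intent.
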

The proof of Theorem \ref{th:DiffTrap} is similar to the previous proofs, using the appropriate error bounds. Although the order of the error is identical to that of Theorem (\ref{th:DiffOrd2}), we will see that this method tends to have lower error. We did not include the constant terms for these errors for brevity, since the higher order Ito-Taylor expansions involve many terms.

\begin{comment}
\section{Stratonovich Drift}
	The estimates given above are all for the Ito drift and diffusion coefficients. However, we can also use the ideas here to estimate the Stratonovich drift coefficients.
\end{comment}
	
\section{Numerical Examples}
\label{sec:Numerics}
\begin{table}[b!]
	\label{Tab:MethodSummary}
	\centering
	\def\arraystretch{1.5}
	\setlength{\tabcolsep}{15pt}
	\resizebox{\columnwidth}{!}{
		\begin{tabular}{|c||c|c||c|c|}
			\hline
			& \multicolumn{2}{l||}{Drift} & \multicolumn{2}{l|}{Diffusion} \\
			\hline \hline
			Name        & Equation      & Leading Error Term & Equation     &   Error   \\
			\hline
			FD-Ord 1        &        (\ref{eq:MuNormal})       &     $\frac{C_1}{2}\|L^0\mu^i\|_2\Delta t$	 
			&    (\ref{eq:SigNormal})          &       $O(\Delta t)$       \\
			\hline
			FD-Ord 2        &      (\ref{eq:DriftOrd2Normal})        &      $\frac{2C_1}{3}\|(L^0)^2\mu^i\|_2\Delta t^2$     &       (\ref{eq:DiffOrd2Normal})      &     $O(\Delta t^2)$         \\
			\hline
			Trapezoidal &   (\ref{eq:TrapNormal})          &    $\frac{C_1}{12}\|(L^0)^2\mu^i\|_2\Delta t^2$ &   (\ref{eq:TrapSigNormal})       &     $O(\Delta t^2)$         \\
			\hline
			Drift-Sub   &      -         &       -    &      (\ref{eq:MuSubNormal})        &      $O(\Delta t)$        \\
			\hline
		\end{tabular}
	}
	\caption{Summary of the methods for estimating the drift $(\mu^i)$ and the diffusion $(\Sigma^{i,j})$.}
\end{table}
In this section, we demonstrate the performance of the methods presented above on numerical examples. For each example, we will generate approximations $\tilde{\alpha}^i\approx \alpha^i$ and $\tilde{\beta}^{i,j}\approx \beta^{i,j}$. However, to present the data more simply, instead of computing the mean and mean squared error for each vector $\tilde{\alpha}^i$ and $\tilde{\beta}^{i,j}$, we will be aggregating the errors across all the coefficients. We will compute the mean error, normalized for the norms of $\alpha^i$ and $\beta^{i,j}$ using
\[Err_m=\left(\frac{\sum_{i=1}^d \|\E(\tilde{\alpha}^i)-\alpha^i\|_2^2}{\sum_{i=1}^d \|\alpha^i\|_2^2}\right)^{\frac{1}{2}} ~~~~\text{or}~~~~ Err_m=\left(\frac{\sum_{i\geq j\geq 1}^d \|\E(\tilde{\beta}^{i,j})-\beta^{i,j}\|_2^2}{\sum_{\substack{i\geq j \geq 1}}^d \|\beta^{i,j}\|_2^2}\right)^{\frac{1}{2}}.\]
Similarly, we will calculate the normalized variance
\[Err_{var}=\frac{\sum_{i=1}^d Var\left(\tilde{\alpha}^i\right)}{\sum_{i=1}^d \|\alpha^i\|_2^2}~~~~\text{or}~~~~ Err_{var}=\frac{\sum_{i\geq j\geq 1}^d Var\left(\tilde{\beta}^{i,j}\right)}{\sum_{\substack{i\geq j \geq 1}}^d \|\beta^{i,j}\|_2^2}.\]
Since these errors are based on aggregating the errors for all of the components of $\alpha^i$ or $\beta^{i,j}$, they will demonstrate the same convergence rates as in Theorems \ref{th:DriftOrd1}-\ref{th:DiffTrap}. The constants, however, may be different.

For each example, we will estimate the drift and diffusion using each of the methods described. The drift will be estimated using the first and second order forward differences, as well as the trapezoidal approximation. For the diffusion, we will use the first and second order forward differences, the drift-subtracted first order difference, and the trapezoidal method. For the drift-subtracted estimation, we will use the estimation for $\mu$ generated by the first order forward difference. Similarly, for the trapezoidal approximation for $\Sigma$, we will use the estimate generated by the trapezoidal approximation for $\mu$.

\subsection{Double Well Potential}	
Consider the SDE
\begin{equation}
\label{eq:DoubleWell}
dX_t=\left(-X_t^3+\frac{1}{2}X_t\right)\,dX_t+\left(1+\frac{1}{4}X_t^2\right)dW_t
\end{equation}
This equation represents a diffusion in the double well potential $U(x)=\frac{1}{4}x^4-\frac{1}{2}x^2$. This example is similar to one considered in \cite{boninsegna2018sparse}. Without the diffusion, the trajectories of this system will settle towards one of two fixed points, depending on which basin of attraction it started in. With the stochastic forcing, the trajectories will move around in one basin of attraction until it gets sufficiently perturbed to move to the other basin. We also note that for the majority of the trajectory, the state will be near the point where the drift is zero, so the dynamics will be dominated by the diffusion. At these points, the trajectory will behave similarly to Brownian motion.

\begin{figure}[b!]
	\label{fig:DoubleWellDrift}
	\begin{center}
		\includegraphics[scale=0.25]{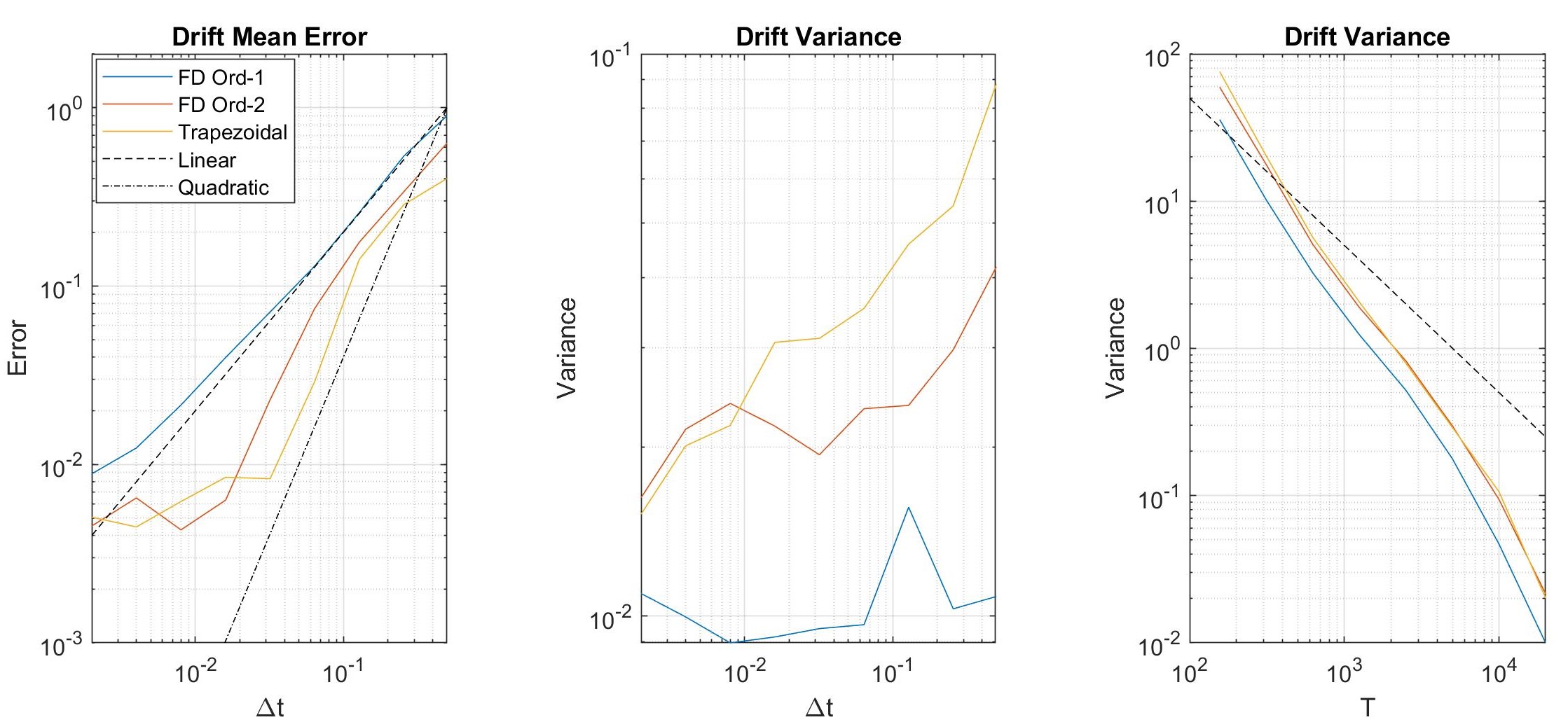}	
	\end{center}
	
	\caption{(Left) The mean error in the estimation of the drift coefficients for the double well system (\ref{eq:DoubleWell}) is plotted as a function of $\Delta t$. The error is approximated using 1,000 trajectories of length $T=20,000$. 
		\\(Center, Right) The variance for each method is plotted against the sampling period, $\Delta t$, and the trajectory length, $T$. The trajectory length is fixed at $T=20,000$ for the center plot, while the sampling period was fixed at $\Delta t=0.004=4\times 10^{-3}$ for the rightmost plot.}
\end{figure}

\begin{figure}[t!]
	\label{fig:DoubleWellDiff}
	\begin{center}
		\includegraphics[scale=0.25]{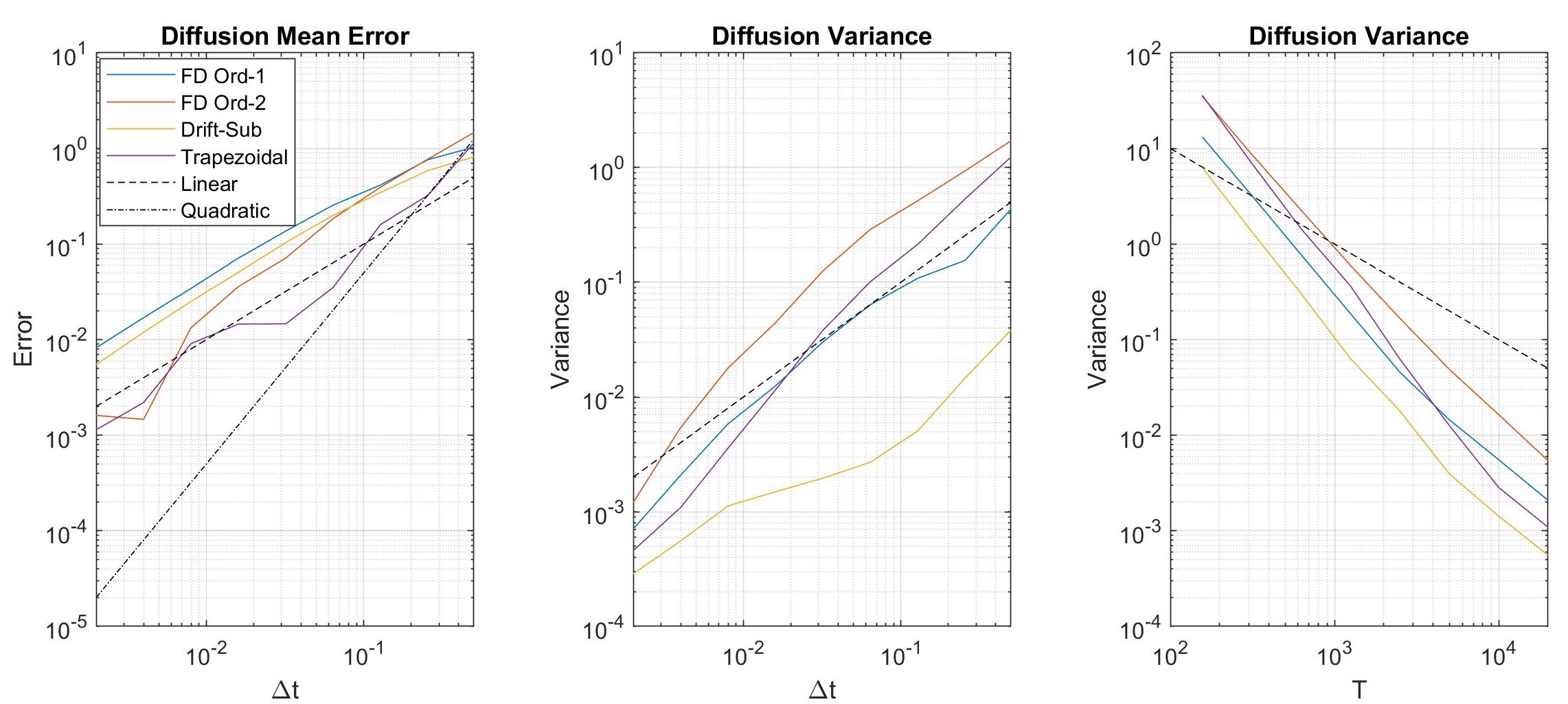}	
	\end{center}
	\caption{(Left) The mean error in the estimation of the diffusion coefficients for the double well system (\ref{eq:DoubleWell}) is plotted as a function of $\Delta t$. The error is approximated using 1,000 trajectories of length $T=20,000$. 
		\\(Center, Right) The variance for each method is plotted against the sampling period, $\Delta t$, and the trajectory length, $T$. The trajectory length is fixed at $T=20,000$ for the center plot, while the sampling period was fixed at $\Delta t=0.04=4\times 10^{-3}$ for the rightmost plot.}
\end{figure}

For the SINDy algorithm, we will use a dictionary of monomials in $x$ up to degree 14:
\[\theta(x)=\begin{bmatrix} 1 & x & \hdots & x^{14}\end{bmatrix}.\]
This basis will be used to estimate both the drift and diffusion. To generate the data for the algorithm, we simulated (\ref{eq:DoubleWell}) using the Euler-Maruyama method 1,000 times with a time step of $2\times 10^{-4}$ and a duration of 20,000. The initial condition was drawn randomly for each simulation from the standard normal distribution. The SINDy methods were then run on the data from each simulation for different sampling periods, $\Delta t$, and lengths of the trajectory, $T$. We use a minimum $\Delta t$ of $0.002$ so the simulation has a resolution of at least ten steps between each data sample. The truncation parameters for the sparse solver were set at $\lambda = 0.005$ for the drift and $\lambda=0.001$ for the diffusion.

As can be seen from from figure \ref{fig:DoubleWellDrift}, the expected errors in all three methods for the drift were converging to zero as $\Delta t\to 0$. For small $\Delta t$, the expected estimate was within $1\%$ of the true value. Additionally, the two higher order methods showed that, in expectation, they produce more accurate results and appear to converge more quickly, in line with Theorems \ref{th:DriftOrd1}, \ref{th:DriftOrd2}, and \ref{th:DriftTrap}. For these methods, the expected error was as much as an order of magnitude smaller, depending on the size of $\Delta t$. The convergence rate for the first order method scales linearly with $\Delta t$, while the higher order methods appear to scale quadratically until $\Delta t= 0.02,$ at which point there is likely not enough data overcome the variance in the estimate.

The variance, however, is rather large relative to the size of the expected error for all three methods. This is likely due to the system tending to settle towards the points $x=\pm 1/\sqrt{2}$ where the drift is zero. Near these points, the dynamics are dominated by the diffusion, making it difficult to estimate the drift. As can be seen (noting the scale of the center plot), the variance does not change a great amount as $\Delta t$ decreases, as is predicted for the estimates of the drift. As shown in the rightmost plot, the variance decreases as the length of the trajectory increases, slightly faster than linearly in $1/T$. In order to more fully benefit from using the higher order methods to the full extent, we would need a long enough trajectory to control the variance.

For the diffusion, figure \ref{fig:DoubleWellDiff} shows again that, as $\Delta t\to 0$, all of the methods do indeed converge in expectation. The Drift-Sub method slightly outperforms FD-Ord 1, the error is typically reduced by about $20\%-30\%$. Of the two higher order method, the trapezoidal method typically yields the best results, often an order of magnitude better than FD-Ord 1, although it does not appear to scale quadratically in $\Delta t$ as predicted by the theorem. This is likely due to a lack of sufficient data to average over the noise. FD-Ord 2 also gives substantial improvements for small $\Delta t$. Contrary to the drift, the variance in the estimate of the diffusion does decrease as $\Delta t$ goes to zero. The decrease appears to be proportional to $\Delta t$ and slightly faster than linear in $1/T$, which is roughly in line with the Theorems \ref{th:DiffOrd1}, \ref{th:DiffOrd2}, and \ref{th:DiffTrap}.

\subsection{Noisy Van-Der-Pol Oscillator}
Consider the ODE
\[\begin{bmatrix}
\dot{x}^1 \\ \dot{x}^2
\end{bmatrix}=\begin{bmatrix} x^2 \\ (1-(x^1)^2)x^2-x^1 \end{bmatrix}.\]
This is the Van-Der-Pol equation, which describes a nonlinear oscillator. We can perturb this equation by adding noise, we get the SDE
\begin{equation}
\label{eq:VanDerPol}
\begin{bmatrix}
dX^1_t \\ dX^2_t 
\end{bmatrix}=\begin{bmatrix} X^2_t \\ (1-(X^1_t)^2)X^2_t-X^1_t\end{bmatrix}dt+\sigma(X_t) dW_t,
\end{equation}
where $W_t$ is a two dimensional Wiener process. For the simulations, we let
\[\sigma(x)=\frac{1}{2}\begin{bmatrix} 1+0.3x^2 & 0 \\ 0 & 0.5+0.2 x^1\end{bmatrix}.\]
We chose this system to represent a different type of limiting behavior, and the estimation of a stochastic Van-Der-Pol oscillator was also considered in \cite{bruckner2020inferring}. For this system, the dynamics settle around a limit cycle. While they will have a certain amount of randomness, the trajectories will demonstrate an approximately cyclic behavior. In particular, this also means that the drift will rarely be near zero, as opposed to the previous example where the drift was often small.

\begin{figure}[b!]
	\label{fig:VanDerPolDrift}
	\begin{center}
		\includegraphics[scale=0.25]{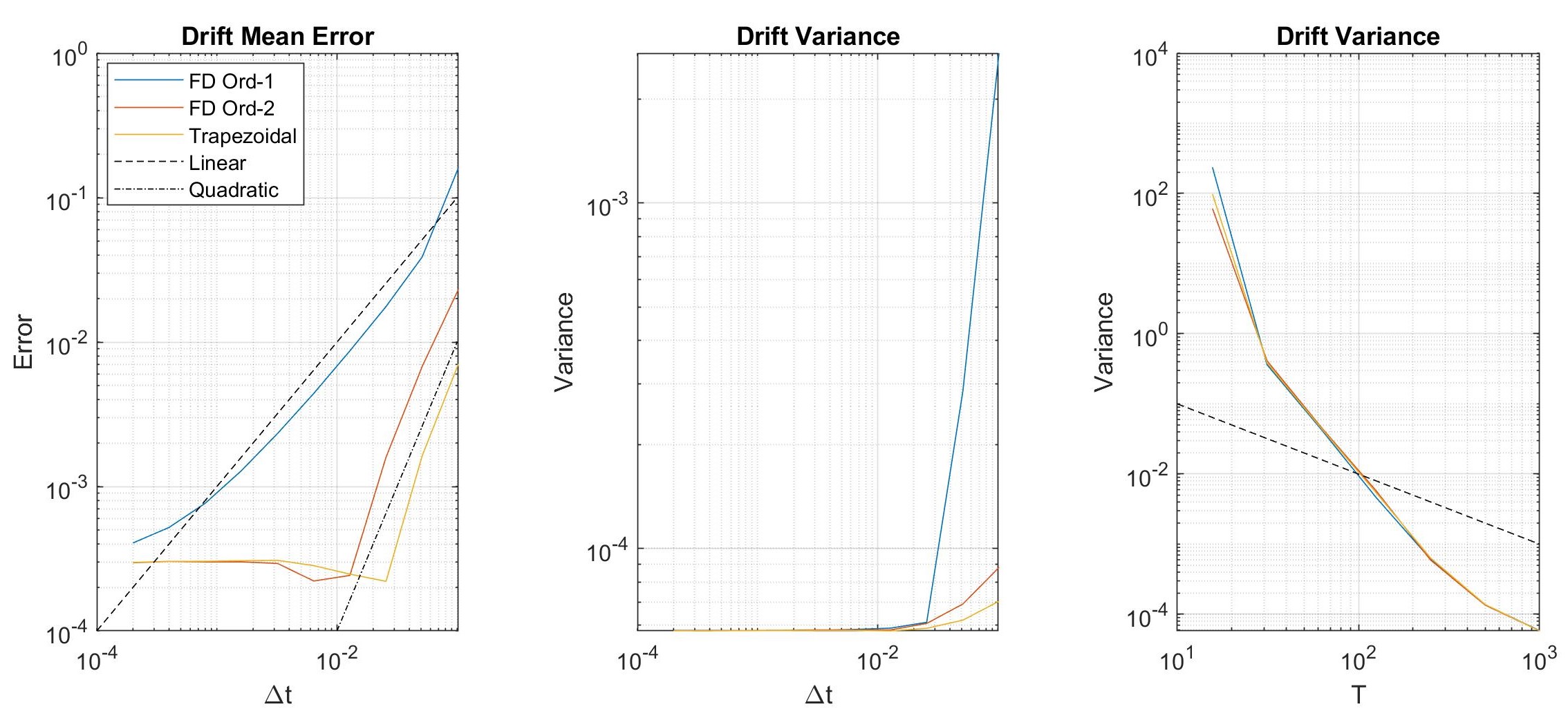}	
	\end{center}
	
	\caption{(Left) The mean error in the estimation of the drift coefficients for the Van-Der-Pol system (\ref{eq:VanDerPol}) is plotted as a function of $\Delta t$. The error is approximated using 1,000 trajectories of length $T=1,000$. 
		\\(Center, Right) The variance for each method is plotted against the sampling period, $\Delta t$, and the trajectory length, $T$. The trajectory length is fixed at $T=1,000$ for the center plot, while the sampling period was fixed at $\Delta t=0.008=8\times 10^{-3}$ for the rightmost plot.}
\end{figure}

\begin{figure}[t!]
	\label{fig:VanDerPolDiff}
	\begin{center}
		\includegraphics[scale=0.25]{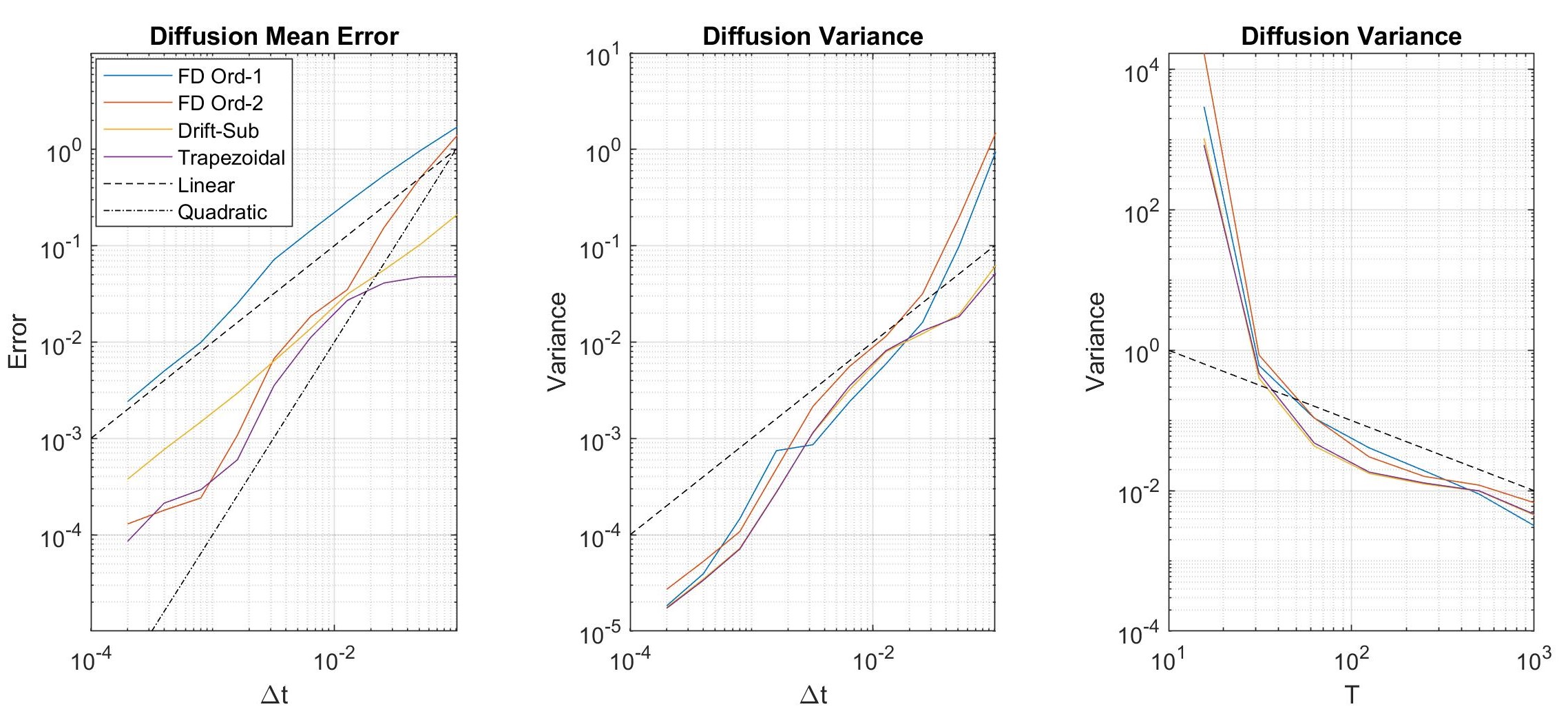}	
	\end{center}
	\caption{(Left) The mean error in the estimation of the diffusion coefficients for the Van-Der-Pol system (\ref{eq:VanDerPol}) is plotted as a function of $\Delta t$. The error is approximated using 1,000 trajectories of length $T=1,000$. 
		\\(Center, Right) The variance for each method is plotted against the sampling period, $\Delta t$, and the trajectory length, $T$. The trajectory length is fixed at $T=1,000$ for the center plot, while the sampling period was fixed at $\Delta t=0.008=8\times 10^{-3}$ for the rightmost plot.}
\end{figure}

The dictionary we will use for the SINDy algorithm consists of all monomials in $x^1$ and $x^2$ up to degree 6:
\[\theta(x)=\begin{bmatrix}
1 & x^1 & x^2 & x^1x^2 & \hdots & (x^1)^2(x^2)^4 & x^1(x^2)^5 & (x^2)^6
\end{bmatrix}.\]
This basis will be used to estimate both the drift and diffusion. To generate the data for the algorithm, we simulated (\ref{eq:VanDerPol}) using the Euler-Maruyama method 1,000 times with a time step of $2\times 10^{-5}$ and a duration of 1,000. Each component of the initial condition was drawn randomly for each simulation from the standard normal distribution. The SINDy methods were then run on the data from each simulation for different sampling periods, $\Delta t,$ and lengths of the trajectory, $T$. As before, we use $\Delta t\geq 2\times 10^{-4}$ to ensure that sampling period is at least 10 times the simulation time step. The truncation parameters for the sparse solver were set at $\lambda = 0.05$ for the drift and $\lambda=0.02$ for the diffusion.

In figure \ref{fig:VanDerPolDrift}, we first note that the variance very quickly drops to about $5\times 10^{-5}$ and stays roughly constant as $\Delta t$ decreases. This falls very much in line with the Theorems \ref{th:DriftOrd1}, \ref{th:DriftOrd2}, and \ref{th:DriftTrap} which assert that the variance does not depend on the sample frequency, it only decreases with the trajectory length $T$. For the expected error, the FD-Ord 2 and trapezoidal methods show drastic improvements over FD-Ord 1, with the trapezoidal method reducing the error by almost two orders of magnitude on some values of $\Delta t$. For the larger $\Delta t$, the slopes of the graphs demonstrate that these methods are converging at twice the order of the first order forward difference, as predicted by Theorems \ref{th:DriftOrd1}, \ref{th:DriftOrd2}, and \ref{th:DriftTrap}. However, both second order methods quickly reach a point where the performance remained constant at about $2\times 10^{-4}$. This is due to the lack of data to average over the random variation to sufficient precision. With sufficient data, we would expect the performance to continue to improve proportionally to $\Delta t^2$.

For the diffusion, figure \ref{fig:VanDerPolDiff} demonstrates a greater separation in the performance of the different methods compared to the double well system. Here, the FD-Ord 1 and drift subtracted methods both demonstrate the same first order convergence, as predicted in Theorem \ref{th:DiffOrd1}, but the drift subtracted method demonstrates a substantially lower error, ranging from half an order to almost a full order of magnitude better. FD-Ord 2 begins at roughly the same error as FD-Ord 1 for large $\Delta t$, but convergences faster, as predicted by Theorem \ref{th:DiffOrd2}, until it gives over an order of magnitude improvement for small $\Delta t$. Finally, although it is difficult to judge the speed of convergence for the trapezoidal method, it gives the most accurate results across all $\Delta t$. The variance for all of the methods behave similarly to the Double Well example and as expected, decreasing as $\Delta t\to 0$ and $T\to \infty$.

\subsection{Noisy Lorenz Attractor}
Consider the ODE
\[\dot{x}=\begin{bmatrix}
\dot{x}^1 \\ \dot{x}^2 \\ \dot{x}^3
\end{bmatrix}=\begin{bmatrix} 10(x^2-x^1) \\ x^1(28-x^3)-x^2 \\ x^1x^2-\frac{8}{3}x^3\end{bmatrix}=f(x).\]
This is the Lorenz system, which is famously a chaotic system exhibiting a strange attractor. If we perturb this equation by adding noise, we get the SDE
\begin{equation}
\label{eq:Lorenz}
dX_t=f(X_t)dt+\sigma(X_t) dW_t,
\end{equation}
where $W_t$ is a three dimensional Wiener process. The stochastic Lorenz process was also previous studied in the context of SDE identification in \cite{frishman2020learning}. For this example, we let
\[\sigma(x)=\begin{bmatrix} 1+\sin(x^2) & 0 & \sin(x^1)\\ 0 & 1+\sin(x^3)& 0 \\ \sin(x^1) & 0 & 1-\sin(x^2)\end{bmatrix}.\]

\begin{figure}[b!]
	\label{fig:LorenzDrift}
	\begin{center}
		\includegraphics[scale=0.25]{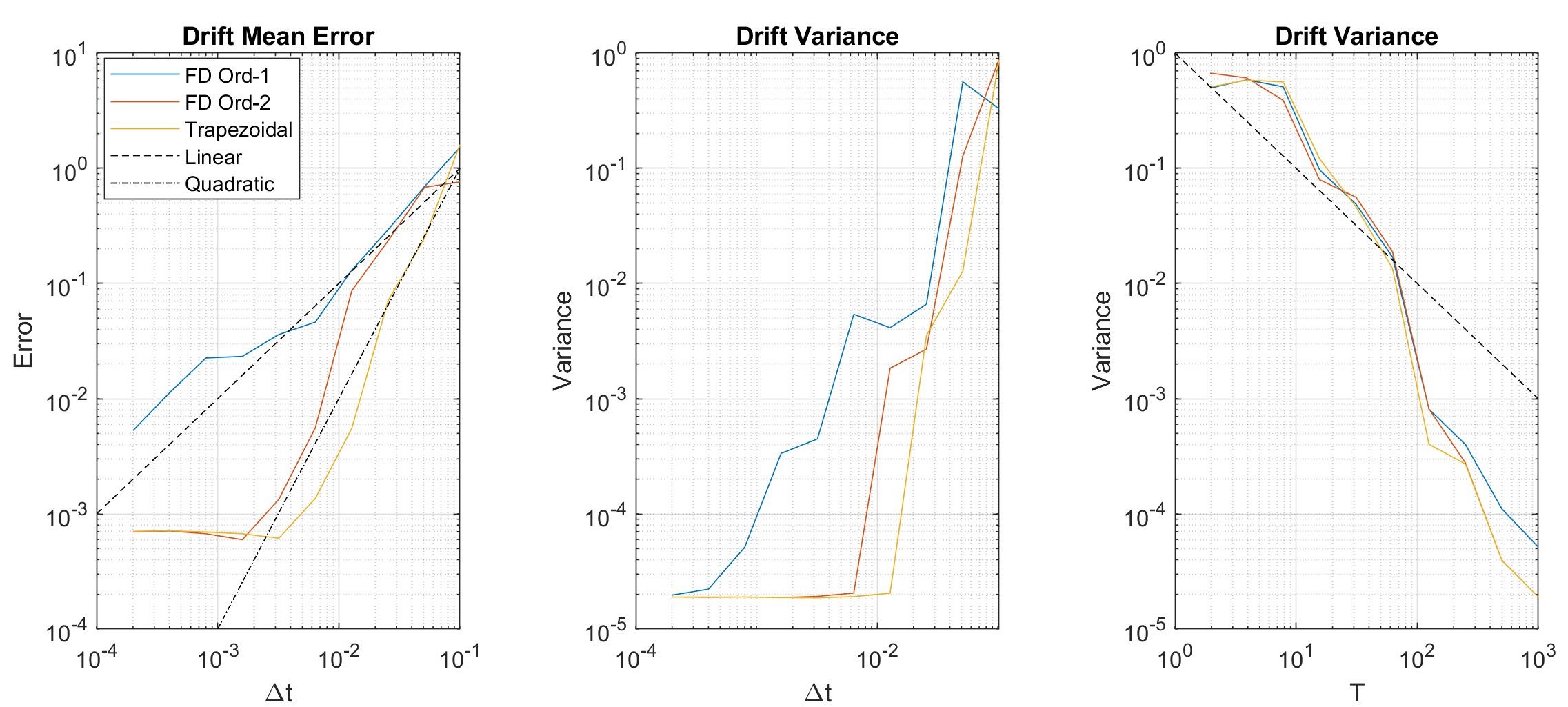}	
	\end{center}
	
	\caption{(Left) The mean error in the estimation of the drift coefficients for the Lorenz system (\ref{eq:Lorenz}) is plotted as a function of $\Delta t$. The error is approximated using 1,000 trajectories of length $T=1,000$. 
		\\(Center, Right) The variance for each method is plotted against the sampling period, $\Delta t$, and the trajectory length, $T$. The trajectory length is fixed at $T=1,000$ for the center plot, while the sampling period was fixed at $\Delta t=0.08=8\times 10^{-2}$ for the rightmost plot.}
\end{figure}

\begin{figure}[t!]
	\label{fig:LorenzDiff}
	\begin{center}
		\includegraphics[scale=0.25]{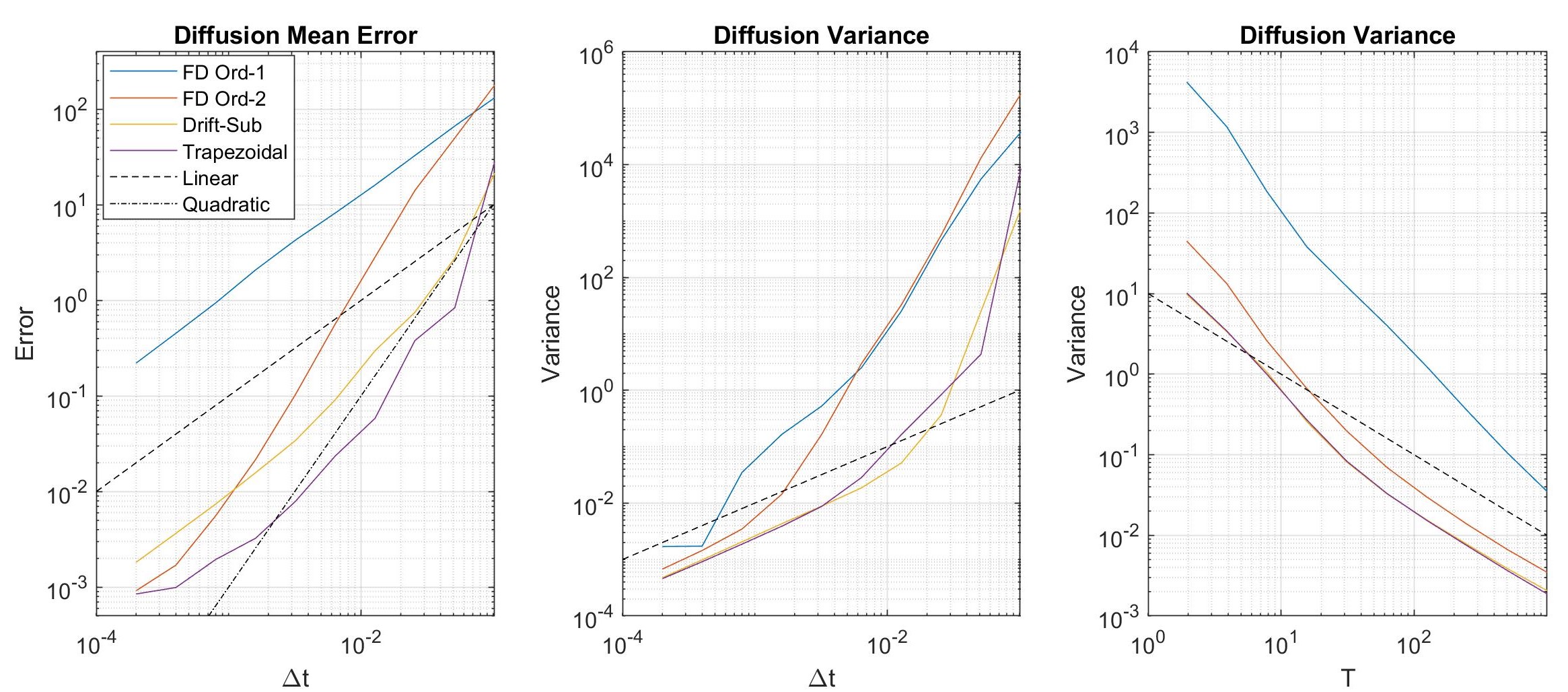}	
	\end{center}
	\caption{(Left) The mean error in the estimation of the diffusion coefficients for the Lorenz system (\ref{eq:Lorenz}) is plotted as a function of $\Delta t$. The error is approximated using 1,000 trajectories of length $T=1,000$. 
		\\(Center, Right) The variance for each method is plotted against the sampling period, $\Delta t$, and the trajectory length, $T$. The trajectory length is fixed at $T=1,000$ for the center plot, while the sampling period was fixed at $\Delta t=0.02=2\times 10^{-2}$ for the rightmost plot.}
\end{figure}

To generate the data for the algorithm, we simulated (\ref{eq:Lorenz}) using the Euler-Maruyama method 1,000 times with a time step of $2\times10^{-5}$ and a duration of 1,000. Each component of initial condition was drawn randomly for each simulation from the standard normal distribution. The SINDy methods were then run on the data from each simulation for different sampling periods, $\Delta t,$ and lengths of the trajectory, $T$. The truncation parameters for the sparse solver were set at $\lambda = 0.05$ for the drift and $\lambda=0.02$ for the diffusion.

We will use different dictionaries to estimate the drift and diffusion. For the drift, the dictionary consists of all monomials in $x^1,~x^2,$ and $x^3$ up to degree 4:
\[\theta(x)=\begin{bmatrix}
1 & x^1 & x^2 & \hdots & x^1x^2(x^3)^3 & (x^2)^2(x^3)^3 & x^2(x^3)^4 & (x^3)^5
\end{bmatrix}.\]
As before, figure \ref{fig:LorenzDrift} shows that the variance of the estimate for the drift decreases steadily as $T \to \infty$, while it approaches a minimum value as $\Delta t$ decreases and remains constant after reaching that minimum. In terms of the mean error, this example gives the clearest confirmation of the convergence rates demonstrated in Theorems \ref{th:DriftOrd1}, \ref{th:DriftOrd2}, and \ref{th:DriftTrap}. The slopes of the plots show that the error with FD-Ord 1 is roughly proportional to $\Delta t$, while the FD-Ord 2 and trapezoidal methods converge at double the rate. For small $\Delta t$, the second order methods do not seem to improve, due to the lack of sufficient data to compute the averages to high enough precision.

To estimate the diffusion, we used a dictionary consisting of all monomials in $\sin(x^1)$, $\sin(x^2)$, and $\sin(x^3)$ up to degree four:
\[\theta(x)=\begin{bmatrix}
1 & \sin(x^1) & \sin(x^2)& \hdots  & \sin(x^1)\sin(x^2)\sin^2(x^3) & \sin(x^2)\sin^3(x^3) & \sin^4(x^3)
\end{bmatrix}.\]
The error plot in figure \ref{fig:LorenzDiff} provides the most compelling example of the improvements of the higher order methods for estimating the diffusion. FD-Ord 1 clearly demonstrates its order one convergence  as $\Delta t\to 0$ (Theorem \ref{th:DiffOrd1}), but the error is quite large compared to the other methods. Even at our highest sampling frequency, $\Delta t=2\times 10^{-4}$, we only get slightly accurate results, with an error over 20\%. For this system, the drift subtracted method, although still first order, provides great improvements over FD-Ord 1, nearly two orders of magnitude better for most $\Delta t$. FD-Ord 2 also demonstrates the second order convergence given in Theorem \ref{th:DiffOrd2}, giving very accurate results for small $\Delta t$. Finally, the best performance again comes from the Trapezoidal method, which gives the best performance across all $\Delta t$. As expected from Theorem \ref{th:DiffTrap}, we can see that it converges faster than FD-Ord 1, but the convergence rate is not as clear as that of the other methods.

As for the variance, for all four methods it was roughly linear in $1/T$. It also decreased linearly with $\Delta_t$ once $\Delta t$ was small enough to give accurate estimates overall. However, the Trapezoidal and drift subtracted methods both showed a substantially lower variance for larger $\Delta t$. This is likely because the drift tends to dominate the diffusion in this system. Both the drift subtracted and trapezoidal methods correct for this, preventing the drift from having an effect on the estimate of the diffusion.

\section{Measurement Noise}
As demonstrated in the numerical examples above, the estimates for the drift and diffusion can yield accurate results provided the sampling frequency is high enough and the length of the trajectory is long enough. However, all of the numerical examples presented assumed ideal data. (i.e. there was no measurement noise in the observation of the state.) For real systems, this is rarely the case.

For the estimates presented above, the errors introduced be non-ideal data can be particular large for high sampling frequencies due to the measurement noise being divided by $\Delta t$. The effects of noise on diffusion estimates has been studied especially in the context of single particle tracking \cite{qian1991single,michalet2012optimal,vestergaard2014optimal}. Local estimates of the diffusion function which account for the noise have been presented (\cite{vestergaard2014optimal,frishman2020learning,bruckner2020inferring}), which can further be used to estimate the drift.

In this section, we will demonstrate how the methods presented above can be adapted to processes with measurement noise. For the drift, we will see that the approximations above can be directly adapted to handle noise using instrumental variables. For the diffusion, the approximation presented in \cite{vestergaard2014optimal} gives an unbiased estimate, and can be extended by methods similar to Section \ref{sec:DiffOrd2} for more accurate approximations.

For the duration of this section, we will assume that the measurement noise can be modeled as an i.i.d. Gaussian random vector with zero mean. Letting $Y_t$ be the noisy measurement and $\delta_t$ we have
\[Y_t=X_t+\delta_t,~~~~\delta_t~~\text{i.i.d.}\]
Further, we will also assume the noise is small enough that we can evaluate our dictionary functions accurately. More precisely for any dictionary function $\theta_k$, we will assume 
\[\theta_k(Y_t)=\theta(X_t+\delta_t)=\theta_k(X_t)+\sum_{i=1}^d (\nabla\theta_k)^T\delta_t+O(\|\delta_t\|^2)\]
and we can neglect the second order terms.

\subsection{Stochastic Force Inference}
In \cite{frishman2020learning}, the Stochastic Force Inference (SFI) methodology estimates the drift and diffusion functions of an SDE with both ideal and noisy data. When using noisy data, SFI first estimates the diffusion using the local estimate in \cite{vestergaard2014optimal}. Then, to measure the drift, SFI approximates a Stratonovich integral, which is unbiased with noise, and uses the estimate of the diffusion to correct the Stratonovich integral to the Ito one.

Letting $\Delta Y^i_t=Y^i_{t+\Delta t}-Y_t$, SFI approximates the diffusion using
\begin{equation}
\label{eq:SFIDiff}
\Sigma^{i,j}(X_{t})\approx \frac{(\Delta Y^i_{t}+\Delta Y^j_{t-\Delta t})(\Delta Y^j_t+\Delta Y^j_{t-\Delta t})}{4\Delta t}+\frac{\Delta Y^i_t\Delta Y^j_{t-\Delta t}+\Delta Y^i_{t-\Delta t}\Delta Y^j_t}{4\Delta t}.
\end{equation}
This approximation gives an estimate that converges with order $\Delta t$. Using the notation of (\ref{eq:MatrixDef2}) with the matrices populated using the noisy data $Y_t$, we can set up the normal equation to solve for $\tilde{\beta}^{i,j}$ as
\begin{equation}
\label{eq:SFIDiffNormal}
	\Theta_1^*\Theta_1\tilde{\beta^{i,j}}=\frac{1}{4\Delta t}\Theta_1^*\left[D^i_2\odot D^j_2+D^i_1\odot(D^j_2-D^j_1)+(D^i_2-D^i_1)\odot D^j_2\right].
\end{equation}
When estimating the drift, errors arise from the interaction of the noise in the approximation of $\mu$ and the effects of the noise on the dictionary function. To combat this, SFI evaluates a discretized Stratonovich integral and uses an estimate of the diffusion to convert the Stratonovich integral to an Ito one. The symmetry of the Stratonovich integral removes the bias introduced by the noise. The normal equations to summarize this method are
\begin{equation}
\label{eq:SFIDriftNormal}
\Theta_0^*\Theta_0\tilde{\alpha^i}=\frac{1}{2\Delta t}(\Theta_0+\Theta_1)^*D^i_1+\sum_{x=0}^{N-1}\sum_{j=1}^d  \Sigma^{i,j}(X_{t_n})\frac{\partial \theta}{\partial x^j}(X_{t_n}).
\end{equation}
For the evaluation $\Sigma^{i,j}(X_t)$ in this equation we can use the approximation (\ref{eq:SFIDiff}). The first term on the right hand side of this equation approximates the Stratonovich integral $\int_0^T \theta \circ dX_t$ while the second term corrects it to the Ito integral. For more detailed analysis of these methods, see \cite{frishman2020learning}. While this estimate is unbiased, it does have the disadvantages of requiring the differential of $\theta$ and using an estimate of $\Sigma$, which will also have some error.

\subsection{Instrumental Variables for Estimating Drift}
While the SFI method, it does have the disadvantages of using an estimate of $\Sigma$, which will also have some error, and requiring knowledge of the differential of $\theta$. However, we can adapt the estimates in section \ref{sec:Drift} to be unbiased. These methods will realize the same order of convergence (with respect to $\Delta t$) as the methods with ideal data in the large data limit. Additionally, they have the advantage of being simple to implement, and do not require the differential of $\theta$.

Consider the first order forward difference in the presence of noise
\[\mu(X_t)\approx \frac{Y_{t+\Delta t}-Y_t}{\Delta t}=\frac{X_{t+\Delta t}-X_t}{\Delta t}+\frac{\delta_{t+\Delta t}-\delta t}{\Delta t}.\]
Since this approximation is linear the noise $\delta_t$, its expected value is unaffected by the noise. All of the difference methods presented in section \ref{sec:Drift} have this property, since they are linear. The bias only comes from the interaction of the noise in the numerical derivatives and the noise in the dictionary.
\[\E\left(\theta_k(Y_t)\frac{\delta_{t+\Delta t}-\delta_t}{\Delta t}\right)\approx \E\left[\left(\theta_k(X)+(\nabla \theta_k)^T\delta_t\right)\left(\frac{\delta_{t+\Delta t}-\delta_t}{\Delta t}\right)\right]=Cov(\delta_t)\nabla\theta_k,\]
where $Cov(\delta_t)=\E(\delta_t\delta_t^T)$ is the covaraince matrix of $\delta_t$. However, if we use the previous dictionary values, $\theta(Y_{t-\Delta t})$, we have
\[\E\left(\theta(Y_{t-\Delta t})\frac{Y_{t+\Delta_t}-Y_t}{\Delta_t}\right)=\theta(X_{t-\Delta t})\frac{X_{t+\Delta t}-X_t}{\Delta t}\]
since the noise in $Y_{t-\Delta t},Y_t,$ and $Y_{t+\Delta t}$ are all independent. This amounts to using $\Theta(Y_{t-\Delta t})$ as a set of instrumental variables (see \cite{soderstrom1983instrumental}). The normal equation for this regression is
\begin{equation}
\label{eq:DriftOrd1IV}
\Theta_0^*\Theta_1\tilde{\alpha}^i=\frac{1}{\Delta t}\Theta_0 D^i_1.
\end{equation}
Similar to the first order method above, we can find a set of normal equations for the trapezoidal method for drift using instrumental variables.
\begin{equation}
\label{eq:DriftTrapIV}
\frac{1}{2}\Theta_0\left(\Theta_1+\Theta_2\right)\tilde{\alpha}^i=\frac{1}{\Delta t}\Theta_0D^i_1
\end{equation}

\subsection{Improving the Diffusion Estimate}
Equation (\ref{eq:SFIDiff}) gives an $O(\Delta t)$ approximation of $\Sigma^{i,j}(X_t)$ in expectation. We can improve this estimate in a similar manner to the Trapezoidal method for diffusion. Let
\[s^i(t)=Y^i_{t+\Delta t}-Y^i_t-\frac{\Delta t}{2}\left(\mu^i(Y_t)+\mu^i(Y_{t+\Delta t})\right),~~~~~q^i(t)=Y^i_{t+2\Delta t}-Y^i_t-\Delta t\left(\mu^i(Y_t)+\mu^i(Y_{t+2\Delta t})\right).\]
Then we can use the approximation
\begin{equation}
\label{eq:DiffTrapNoise}
\Sigma^{i,j}(X_t)+\Sigma^{i,j}(X_{t+2\Delta t})\approx\frac{1}{2\Delta t}\left(q^i(t)q^j(t)+s^i(t)s^i(t+1)+s^i(t+1)s^i(t)\right).
\end{equation}

Letting
\[S^i_n=\begin{bmatrix} s^i(t_n) & s^i(t_{n+1}) & \hdots s^i(t_{N+n-1})\end{bmatrix}^T,~~~\text{and}~~~ Q^i_n=\begin{bmatrix} q^i(t_n) & q^i(t_{n+1}) & \hdots q^i(t_{N+n-1})\end{bmatrix}^T,\]
we can set up the instrumental variables regression
\begin{equation}
\label{eq:TrapNoiseNormal}
\Theta_0^*(\Theta_1+\Theta_3)\tilde{\beta}^{i,j}=\frac{1}{2\Delta t}\Theta_0^*\left(Q^i_1\odot Q^j_1+S^i_1\odot S^j_2+S^i_2 \odot S^j_1\right)
\end{equation}
to solve for $\tilde{\beta}^{i,j}$.

\begin{figure}[b!]
	\label{fig:VanDerPolDriftNoise}
	\begin{center}
		\includegraphics[scale=0.25]{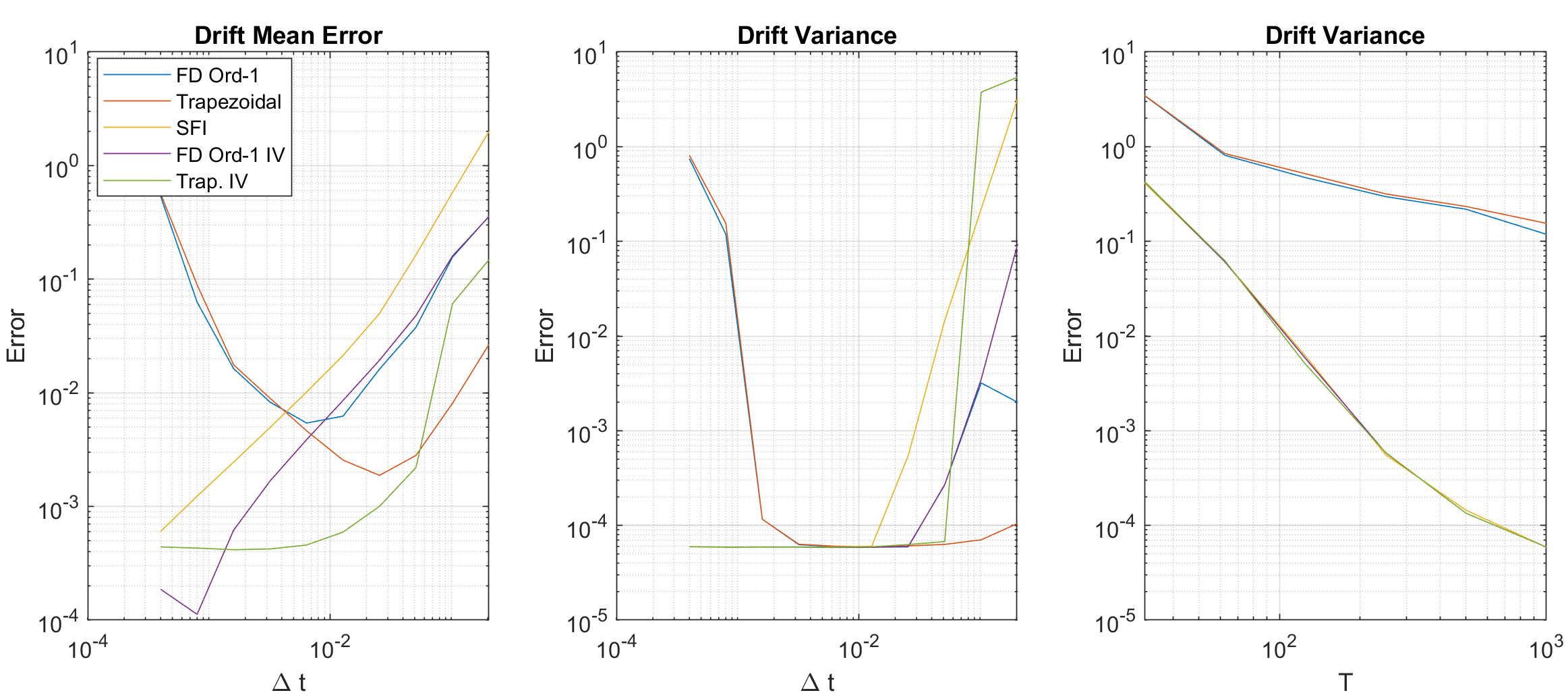}	
	\end{center}
	\caption{(Left) The mean error in the estimation of the drift coefficients for the Van-Der-Pol system (\ref{eq:VanDerPol}) with measurement noise is plotted as a function of $\Delta t$. The error is approximated using 1,000 trajectories of length $T=1,000$. 
		\\(Center, Right) The variance for each method is plotted against the sampling period, $\Delta t$, and the trajectory length, $T$. The trajectory length is fixed at $T=1,000$ for the center plot, while the sampling period was fixed at $\Delta t=0.008=8\times 10^{-3}$ for the rightmost plot.}
\end{figure}

\begin{figure}[t!]
	\label{fig:VanDerPolDiffNoise}
	\begin{center}
		\includegraphics[scale=0.25]{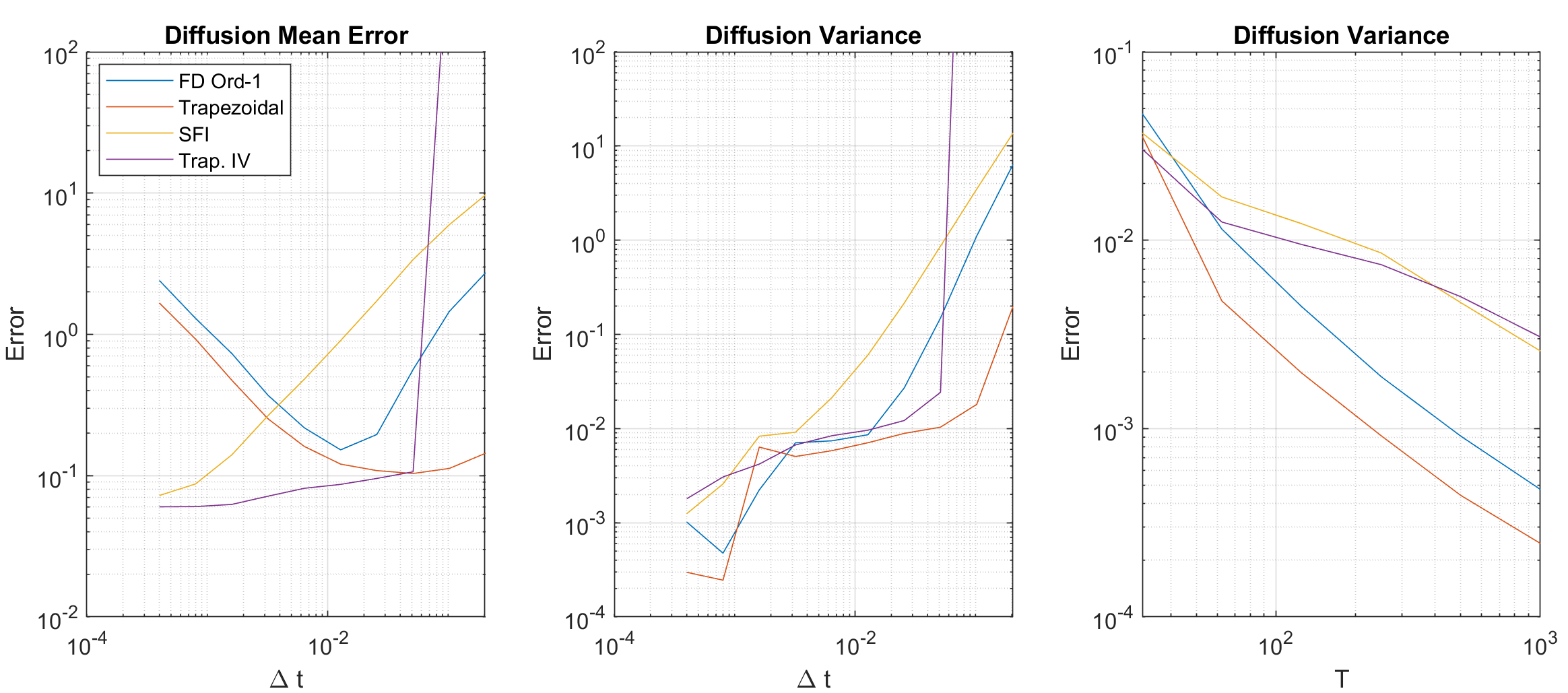}	
	\end{center}
	\caption{(Left) The mean error in the estimation of the diffusion coefficients for the Van-Der-Pol system (\ref{eq:VanDerPol}) with measurement noise is plotted as a function of $\Delta t$. The error is approximated using 1,000 trajectories of length $T=1,000$. 
		\\(Center, Right) The variance for each method is plotted against the sampling period, $\Delta t$, and the trajectory length, $T$. The trajectory length is fixed at $T=1,000$ for the center plot, while the sampling period was fixed at $\Delta t=0.008=8\times 10^{-3}$ for the rightmost plot.}
\end{figure}

\subsection{Van-Der-Pol Oscillator}
We now consider the stochastic Van-Der-Pol oscillator given by (\ref{eq:VanDerPol}) with measurement noise on the state $X_t$. The each component of the noise $\delta^i_t$ is drawn from a normal distribution with zero mean and a standard deviation of $0.02$. The system was simulated 1,000 times with a time step of $4\times 10^{-5}$ and a duration of 1,000. As before, we use $\Delta t\geq 4\times 10^{-4}$ to ensure that sampling period is at least 10 times the simulation time step. The truncation parameters for the sparse solver were set at $\lambda = 0.05$ for both the drift and the diffusion. Using this data, we test the noise-corrected methods presented in this section along with the first order and trapezoidal methods tested in section \ref{sec:Numerics}.

As can be seen from the error plots, while the methods which don't account for noise may somewhat accurate estimates for large $\Delta t$, as $\Delta t\to0$ they diverge from the true values of $\alpha^i$ and $\beta^{i,j}$. For the drift estimate, the SFI and methods using instrumental variables improve as $\Delta t\to 0$. The instrumental variables methods, however tend to give more accurate results, with the trapezoidal IV method greatly outperforming the others for larger $\Delta t$. For the diffusion, the estimate in SFI converges towards the true values of $\beta^{i,j}$ as $\Delta_t\to 0$. The trapezoidal method, however, reaches the same level of accuracy for much larger $\Delta t$.

\section{Conclusion}
As was shown in this and previous papers (\cite{boninsegna2018sparse},\cite{dai2020detecting},\cite{callaham2021nonlinear}), the SINDy algorithm can be used to accurately estimate the parameters of a stochastic differential equation. However, the significant amount of noise involved requires one to use either use great deal of data (i.e. a long time series) and/or methods which improve the robustness of SINDy to noise. Unfortunately, even if SINDy should identify all of the correct dictionary functions present in the dynamics, we showed that the sampling frequency limits the accuracy of the results when using the first order Kramer-Moyal formulas to estimate the drift and diffusion. The necessity for high sampling frequencies, combined with long trajectories, make SINDy a data hungry algorithm.

The higher order estimates presented in this paper allow us to overcome the $O(\Delta t)$ convergence given in \cite{boninsegna2018sparse,frishman2020learning}. With the higher order methods we can compute accurate estimations of the SDEs using far lower sampling frequencies. In addition to making SINDy a more accurate system identification tool, these improvements also greatly reduce the data requirements to feed the algorithm. By achieving accurate results at lower sampling frequencies we can reduce the data acquisition constraint, which makes SINDy a more feasible system identification method for SDEs.

\newpage
\bibliographystyle{plain}
\bibliography{Sources.bib}

\newpage
\appendix
\section{Error Derivations for Section \ref{sec:FirstOrder}}

In Theorems \ref{th:DriftOrd1}-\ref{th:DiffTrap}, we used estimates of the drift and diffusion based on finite differences. Most of the derivations are straightforward and follow almost immediately from the Ito-Taylor expansions. However, bounding the estimate for the variance in the second order difference for the drift and the first order estimate for the diffusion require a little extra work, so we include them here.
\subsection{Drift: Second Order Forward Difference}
\label{App:ErrorDrift1}
The error in the second order forward difference estimate (\ref{eq:FDOrd2}) for the drift is given by
\begin{equation*}
e_{t_n}=\mu^i(X_{t_n})-\frac{-3X^i_{t_n}+4X^i_{t_{n+1}}-X^i_{t_{n+2}}}{2\Delta t}.
\end{equation*}
Using (\ref{eq:WeakDrift}) in the estimate above gives us
\begin{equation}
\label{eq:AppDriftErrOrd2}
\E(e_t\,|\,X_t)=-\frac{\Delta t}{3}(L^0)^2\mu^i +O(\Delta t^3).
\end{equation}
\begin{proof}[Proof of Theorem \ref{th:DriftOrd2}]
The proof of the estimate on the mean error follows from (\ref{eq:AppDriftErrOrd2}) and the proof of Theorem \ref{th:DriftOrd1}. 
Now, Let
\[\Theta_0=\begin{bmatrix}
\theta(X_{t_0}) \\ \theta(X_{t_{1}}) \\ \vdots \\ \theta(X_{t_{N-1}})
\end{bmatrix}~~~~\text{and}~~~~E=\begin{bmatrix} e_0 \\ e_1 \\ \vdots \\ e_{N-1} \end{bmatrix}.\]
To estimate the variance, we need to find $\E(\|\frac{1}{N}\Theta_0^*E\|_2^2)$. To do this, we will use the strong expansion (\ref{eq:StrongDrift}) and obtain
\[e_t=\frac{1}{2\Delta t}\left(\sum_{m=1}^d\sigma^{i,m}(3\Delta W^m_t-\Delta W^m_{t+1})+R_t\right)\]
with $\E(|R_t|^2)=O(\Delta t^2)$.
Then, using the 
\begin{align*}
\frac{1}{N}\Theta_0^*E&=\frac{1}{N}\sum_{n=0}^{N-1} \theta^*(X_{t_n})e_{t_n}=\frac{1}{N}\sum_{n=0}^{N-1} \theta^*(X_{t_n})\left(\sum_{m=1}^d \sigma^{i,m}(X_{t_n})\frac{3\Delta W^m_{t_n}-\Delta W^m_{t_{n+1}}}{2\Delta t}+\frac{R_{t_n}}{2\Delta t}\right)\\
&=\frac{1}{2T}\sum_{n=0}^{N-1} \theta^*(X_{t_n})\left(\sum_{m=1}^d (3\sigma^{i,m}(X_{t_n})-\sigma^{i,m}(X_{t_{n-1}}))\Delta W^m_{t_n}+R_{t_n}\right)+R_1\\
&=\frac{1}{T}\sum_{n=0}^{N-1} \theta^*(X_{t_n})\left(\sum_{m=1}^d (\sigma^{i,m}(X_{t_n})+R^m_{t_n})\Delta W^m_{t_n}+R_{t_n}\right)+R_1
\end{align*}
where
\[R_1=\frac{1}{T}\sum_{m=1}^d\left(\theta^*(X_{t_0})\sigma^{i,m}(X_{t_0})\Delta W^m_{t_0}-\theta^*(X_{t_N})\sigma^{i,m}(X_{t_N})\Delta W^m_{t_N}\right)\] and $\E(|R^m_{t_n}|^2)=O(\Delta t)$. The second line comes from rearranging the indices of the sum which gives the remainder $R_1$ and he last line uses the Ito-Taylor expansion of $\sigma^{i,m}$, which gives the remainder $R_2$. Combining all of  the errors gives us
\[\frac{1}{N}\Theta_0^*E=\frac{1}{T}\sum_{n=0}^{N-1}\sum_{m=1}^d \theta^*(X_{t_n})\sigma^{i,m}(X_{t_n})\Delta W^m_{t_n}+R\]
with $\E(R^2)=O(\Delta t^2)$. Taking the expectance of the square of this last equation gives us
\[\E\left(\left\|\frac{1}{N}\Theta_0^*E\right\|_2^2\right)\leq\frac{1}{T^2}\sum_{n=0}^N\sum_{m=1}^d\|\theta^*(X_{t_n})\|_2^2 \sigma^{i,m}(X_{t_n})^2\Delta t+O(\Delta t^{\frac{3}{2}}).\]
Using this, the rest of the proof follows that of Theorem \ref{th:DriftOrd1}. 
\end{proof}

\subsection{Diffusion: First Order Forward Difference}
\label{App:ErrDiff1}
For Theorem \ref{th:DiffOrd1}, we use (\ref{eq:FDsquared}) to approximate the diffusion matrix elements. We need to bound the errors to give equations (\ref{eq:DiffErrm}) and (\ref{eq:DiffErrsq}) for the proof. From the approximation (\ref{eq:FDsquared}), the error is
\[e_t=\frac{(X^i_{t+\Delta t}-X^i_{t})(X^j_{t+\Delta t}-X^j_{t})}{2\Delta t}-\Sigma^{i,j}(X_t).\]
The expected error, $\E(e_t|X_t)$ is easy to bound using (\ref{eq:WeakDiff}). To calculate the squared error, \newline $\E(|e_t|^2|X_t)$, we will use the strong expansion (\ref{eq:StrongDiff}). This gives us
\begin{equation}
\label{eq:AppDiffErr}
e_t=\frac{1}{2\Delta t}\left(\sum_{k,l=1}^d (\sigma^{k,i}\sigma^{l,j}(X_t)+\sigma^{k,j}\sigma^{l,i}(X_t))I_{i,j}+R_t\right)
\end{equation}
with
$\E(|R_t|^2|X_t)=O(\Delta t^3)$. From Lemma 5.7.2 of \cite{kloeden1992stochastic}, we have
\[\E(I_{(k,l)}I_{(m,n)})=\begin{cases} 0, & (k,l)\neq(m,n) \\ \frac{\Delta t^2}{2}, & k=m, l=n.\end{cases}\]
Then, squaring (\ref{eq:AppDiffErr}), we get
\begin{align*}
\E(|e_t|^2\,|\,X_t)&=\frac{1}{4\Delta t^2}\sum_{k,l=1}^d(\sigma^{k,i}\sigma^{l,j}(X_t)+\sigma^{k,j}\sigma^{l,i}(X_t))^2\frac{\Delta t^2}{2}+O(\Delta t^{\frac{1}{2}})\\
&=\frac{1}{8}\sum_{k,l=1}^d 2\left((\sigma^{k,i}\sigma^{l,j}(X_t))^2+\sigma^{k,i}\sigma^{l,j}\sigma^{k,j}\sigma^{l,i}(X_t)\right)+O(\Delta t^{\frac{1}{2}})\\
&=\Sigma^{i,i}(X_t)\Sigma^{j,j}(X_t)+\Sigma^{i,j}(X_t)^2+O(\Delta t^{\frac{1}{2}}).
\end{align*}
Which gives us equation (\ref{eq:DiffErrsq}).

\subsection{Trapezoidal Approximation for Diffusion}
\label{sec:AppDiffTrap}
The trapezoidal method to approximate the diffusion is given by equation (\ref{eq:TrapSig}):
\begin{equation}
\label{eq:AppTrapSig}
\Sigma^{i,j}(X_{t+\Delta t})+\Sigma^{i,j}(X_t)\approx\frac{\left(\Delta X^i_t-\frac{\Delta t}{2}(\mu^i(X_{t})+\mu^i(X_{t+\Delta t}))\right)\left(\Delta X^j_t-\frac{\Delta t}{2}(\mu^j(X_t)+\mu^j(X_{t+\Delta t}))\right)}{\Delta t}.
\end{equation}
We claim that the error in this approximation can be bounded by
\[|\E(e_t\,|\,X_t)|=O(\Delta t^2)~~~~~\text{and}~~~~~\E(|e_t|^2)=O(\Delta t).\]
To achieve this, we let $\Delta \mu^i_t=\mu^i(X_{t+\Delta t})-\mu^i(X_t)$ and rewrite the right hand side of (\ref{eq:AppTrapSig}) as
\begin{equation}
\label{eq:AppTrapSig2}
\frac{\left(\Delta X^i_t-\mu^i(X_{t})\Delta t-\frac{\Delta t}{2}\Delta \mu^i\right)\left(\Delta X^j_t-\mu^j(X_t)\Delta t-\frac{\Delta t}{2}\Delta \mu^j_t\right)}{\Delta t}=\Sigma^{i,j}(X_t)+\Sigma^{i,j}(X_{t+\Delta t})+e_t.
\end{equation}
We will look at several of the cross terms separately on the left hand side. Using equations (\ref{eq:WeakDiff}) and (\ref{eq:WeakDrift}), we can see that the first term will be
\[\E\left(\frac{\left(\Delta X_t^i-\mu^i(X_t)\Delta t\right)\left(\Delta X_t^j-\mu^j(X_t)\Delta t\right)}{\Delta t}\Big|X_t\right)=2\Sigma^{i,j}(X_t)+h(X_t)\Delta t +O(\Delta t^2),\]
where 
\[h=L^0\Sigma^{i,j}+\sum_{k=1}^d\left(\Sigma^{j,k}\frac{\partial \mu^i}{\partial x^k}+\Sigma^{i,k}\frac{\partial \mu^j}{\partial x^k}\right).\]
Next we will consider the $\Delta X^i_t\Delta \mu^j_t$ terms. If we use the weak Ito-Taylor expansion of $f(x)=(x^i-X^i_t)(\mu^j(x)-\mu^j(X_t))$, holding $X_t$ fixed, we see that
\[\E\left(\Delta X^i_t\Delta \mu^j_t|X_t\right)=\E(f(X_{t+\Delta t})|X_t)=L^0 f(X_t)\Delta t+O(\Delta t^2)=2\sum_{k=1}^d\Sigma^{i,k}\frac{\partial \mu^j}{\partial x^k}.\]
These will cancel the last terms in $h$. All other terms on the left hand side will be of higher order. For the right hand side of (\ref{eq:AppTrapSig2}), we can use the Ito-Taylor expansion of $\Sigma^{i,j}$ to show
\[\E\left(\Sigma^{i,j}(X_{t+\Delta t})|X_t\right)=\Sigma^{i,j}(X_t)+L^0\Sigma^{i,j}\Delta t+O(\Delta t^2).\]
Combining all of these together, we see that
\[\Sigma^{i,j}(X_t)+\Sigma^{i,j}(X_{t+\Delta t})=\frac{\left(\Delta X^i_t-\mu^i(X_{t})\Delta t-\frac{\Delta t}{2}\Delta \mu^i\right)\left(\Delta X^j_t-\mu^j(X_t)\Delta t-\frac{\Delta t}{2}\Delta \mu^j_t\right)}{\Delta t}+e_t\]
with $\E(e_t|x_t)=O(\Delta t^2)$. The bound for the squared error, $\E(|e_t|^2\,|\, X_t)=O(\Delta t)$, follows easily from (\ref{eq:StrongDiff}), since the correction terms added are all of higher order.

\end{document}